\newcommand{\lsg}{\textnormal{LSG}}
\newcommand{\V}{\textnormal{V}}
\newcommand{\Cay}{\textnormal{Cay}}
\newcommand{\diam}{\textnormal{diam}}
\newcommand{\Sym}{\textnormal{Sym}}
\newcommand{\Alt}{\textnormal{Alt}}
\newcommand{\Aut}{\textnormal{Aut}}
\newcommand{\Fix}{\textnormal{Fix}}
\newcommand{\GL}{\textnormal{GL}}
\newcommand{\SL}{\textnormal{SL}}
\newcommand{\GamL}{\textnormal{$\Gamma$L}}
\newcommand{\AGL}{\textnormal{AGL}}
\newcommand{\Sp}{\textnormal{Sp}}
\newcommand{\Tr}{\textnormal{Tr}}
\newcommand{\F}{\mathbb{F}}
\newcommand{\mymod}[1]{\,\left(\textnormal{mod}\ {#1}\right)}
\newcommand{\bigmod}[1]{\,\big(\textnormal{mod}\ {#1}\big)}
\newtheorem{theorem}{Theorem}[section]
\newtheorem{lemma}[theorem]{Lemma}
\theoremstyle{definition}
\newtheorem*{notation}{Notation}
\newtheorem{example}[theorem]{Example}
\newtheorem{hypothesis}[theorem]{Hypothesis}
\title{Quotient-complete arc-transitive latin square graphs from groups}
\author{Carmen Amarra}
\address{Institute of Mathematics, University of the Philippines Diliman}
\address{Natural Sciences Research Institute, College of Science, University of the Philippines Diliman}
\date{\today}
\begin{document}

\begin{abstract}
We consider latin square graphs $\Gamma = \lsg(H)$ of the Cayley table of a given finite group $H$. We characterize all pairs $(\Gamma,G)$, where $G$ is a subgroup of autoparatopisms of the Cayley table of $H$ such that $G$ acts arc-transitively on $\Gamma$ and all nontrivial $G$-normal quotient graphs of $\Gamma$ are complete. We show that $H$ must be elementary abelian and determine the number $k$ of complete normal quotients. This yields new infinite families of diameter two arc-transitive graphs with $k = 1$ or $k = 2$.
\end{abstract}

\maketitle

\section{Introduction}

A graph $\Gamma$ with automorphism group $G \leq \Aut(\Gamma)$ is said to be \emph{quotient-complete} if each proper $G$-normal quotient of $\Gamma$ is either complete or empty, and $\Gamma$ has at least one nontrivial complete $G$-normal quotient. In this paper we classify all pairs $(\Gamma,G)$, where $\Gamma$ is a latin square graph of a finite group and $G$ is a subgroup of autoparatopisms of $\Gamma$, such that $\Gamma$ is $G$-arc-transitive and $G$-quotient-complete.

This paper is part of a general study of arc-transitive, diameter $2$ graphs carried out in \cite{AGP-quotcomp, HAcase, HApaper}. The family $\mathcal{F}$ of all such graphs is analyzed using normal quotient reduction, and it is shown in \cite[Theorem 2.2]{AGP-quotcomp} that any graph in $\mathcal{F}$ has a normal quotient graph $\Gamma$ with automorphism group $G$ such that $\Gamma$ is either \emph{$G$-vertex-quasiprimitive} (i.e., all nontrivial normal subgroups of $G$ are transitive on vertices) or $G$-quotient-complete. A subclass of vertex-quasiprimitive graphs were studied in \cite{HAcase} and \cite{HApaper}. Quotient-complete graphs were studied in \cite{AGP-quotcomp}, in which it was shown that a significant parameter of quotient-complete graphs is the number $k$ of distinct nontrivial, complete normal quotients. The graphs for which $k \geq 3$ were classified in \cite{AGP-quotcomp} (except for graphs corresponding to subgroups of the one-dimensional affine group). For $k = 1$ and $k = 2$ some infinite families are known, but a classification was not achieved. These infinite families involve products of graphs, and are described in Examples \ref{example:lexicographic} and \ref{example:directproduct}. In addition, another family of arc-transitive, quotient-complete graphs with $k = 2$ was communicated to the author by P. Spiga. These graphs are strongly regular and are described in Example \ref{example:Cay}.

Our goal is to further examine the cases where $k = 1$ and $k = 2$, and in particular to find other infinite families of examples satisfying these. It is known that the graphs in Example \ref{example:Cay} are graphs of Bruck nets \cite{Bruck}, all of which are strongly regular and thus have diameter $2$. Hence a natural problem is to determine which Bruck nets give rise to quotient-complete graphs. The graph in Example \ref{example:Cay} corresponding to the parameter $q = 7$ is a latin square graph of the Cayley table of the cyclic group $C_7$; it is known that all latin squares are Bruck nets. It was shown in \cite{Bruck} that for latin squares of size at least $5$, any automorphism of the latin square graph is induced by an autoparatopism of the underlying latin square. In particular, the Cayley tables of finite groups form a family of latin square graphs whose autoparatopism groups are known \cite{Heinze}.

Our main results are Theorems \ref{theorem:mainthm1} and \ref{theorem:mainthm2}.

\begin{theorem} \label{theorem:mainthm1}
Let $H$ be a finite group and $\Gamma = \lsg(H)$. The graph $\Gamma$ is $G$-arc-transitive and $G$-quotient-complete, for some subgroup $G$ of autoparatopisms of the Cayley table of $H$, only if $H$ is elementary abelian, say $H = C^d_p$ for some prime $p$ and positive integer $d$, satisfying one of the following: $p \not\equiv 2 \mymod{3}$, $d$ is even, or $d \equiv 0 \mymod{3}$. Otherwise, $\Gamma$ is $G$-vertex-quasiprimitive. Conversely, for any $H = C^d_p$ satisfying the above conditions, there exists a subgroup $G$ of autoparatopisms such that $\Gamma$ is $G$-arc-transitive and $G$-quotient-complete.
\end{theorem}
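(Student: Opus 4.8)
The plan is to work throughout in the explicit model of $\Gamma = \lsg(H)$ as the Cayley graph on $H \times H$ whose vertices are the cells $(x,y)$ and whose connection set is the union of the three \emph{parallel classes} --- the row, column and symbol directions, i.e.\ the cells sharing respectively a row, a column, or a product with the identity cell. Since $|H| \geq 5$ in all but finitely many cases (the cases $|H| \leq 4$ I would dispose of by direct inspection), Bruck's theorem lets me identify $\Aut(\Gamma)$ with the autoparatopism group of the Cayley table, which by Heinze's description is $(H \times H) \rtimes \Aut(H)$ --- the translations together with the diagonal action of $\Aut(H)$ --- extended by a group $S_3$ of parastrophes permuting the three parallel classes. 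A subgroup $G$ is then recorded by its intersection with the translations, its image $L$ in $\Aut(H)$, and its image in $S_3$.

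First I would translate $G$-arc-transitivity. The three parallel classes form a block system for the vertex stabiliser $G_v$ on the neighbourhood $\Gamma(v)$, so arc-transitivity is equivalent to the conjunction of (i) the image of $G$ in $S_3$ being transitive on the three classes, and (ii) the stabiliser of a single class acting transitively on its $|H|-1$ neighbours. Condition (ii) says that $\Aut(H)$, extended by at most one parastrophe involution, is transitive on $H \setminus \{1\}$. That involution is induced by a division operation of the loop and hence preserves element orders, so transitivity forces every nontrivial element to have one common prime order $p$; and since the stabiliser consists of autoparatopisms it respects the characteristic centre $Z(H)$, whose nonidentity image in the class can therefore only be the whole class, forcing $Z(H) = H$. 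Thus arc-transitivity already forces $H$ to be abelian of exponent $p$, i.e.\ elementary abelian, say $H = C_p^d$; the point of the theorem is then to decide, among these, when $\Gamma$ is quotient-complete and when it is merely quasiprimitive.

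With $H = C_p^d$ I would identify $\Aut(\Gamma)$ with $\F_p^{2d} \rtimes (\GL(d,p) \times S_3)$, where $\GL(d,p) = \Aut(H)$ acts diagonally and the commuting $S_3$ is generated by the swap $\tau$ and an element $M$ of order $3$ with $M^2 + M + 1 = 0$ cyclically permuting the three $d$-dimensional direction-subspaces $X_{\mathrm r}, X_{\mathrm c}, X_{\mathrm s}$ of $\F_p^{2d}$. Reducing to the affine case $G = \F_p^{2d} \rtimes G_0$ with $G_0 \leq \GL(d,p) \times S_3$ and $L$ its (transitive, by (ii)) image in $\GL(d,p)$, the affine quasiprimitivity criterion gives the dichotomy directly: $\Gamma$ is $G$-vertex-quasiprimitive exactly when $G_0$ is irreducible on $\F_p^{2d}$, and $G$-quotient-complete exactly when $G_0$ is reducible. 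Moreover reducibility automatically yields \emph{complete} quotients: $L$-irreducibility forces every proper $G_0$-submodule $W$ to have dimension $d$ and to meet each of $X_{\mathrm r}, X_{\mathrm c}, X_{\mathrm s}$ trivially, so $X_{\mathrm r}$ maps isomorphically onto $\F_p^{2d}/W$ and the quotient is $K_{p^d}$. The whole problem thus reduces to deciding reducibility of the module $\F_p^{2d} \cong V \otimes_{\F_p} \F_p^2$, where $V = \F_p^d$ is the irreducible $L$-module with $\mathrm{End}_L(V) = \F_{p^e}$ (so $e \mid d$) and $\F_p^2$ carries $M$.

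The reducibility analysis, which I expect to be the crux and the main obstacle, I would organise by $p \bmod 3$. If $p \not\equiv 2 \mymod{3}$ the factor $\F_p^2$ is already $M$-reducible --- it splits into two eigenlines when $p \equiv 1 \mymod{3}$ and has the unique invariant line $\{(v,v)\}$ when $p = 3$ --- so $G_0$ is reducible for every admissible $L$. If $p \equiv 2 \mymod{3}$ then $\F_p[M] \cong \F_{p^2}$ makes $\F_p^{2d}$ an $\F_{p^2}$-module on which $L$ acts $\F_{p^2}$-linearly, and reducibility holds precisely when $\F_{p^e} \otimes_{\F_p} \F_{p^2}$ is not a field, i.e.\ when $e$ is even, or alternatively when the three-cycle can be realised as a degree-$3$ Frobenius twist lying in $\GL(d,p)$; invoking the classification of finite transitive linear groups, an even $e \mid d$ is attainable exactly when $d$ is even, and such a Frobenius twist exists exactly when $3 \mid d$. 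This produces precisely the three stated conditions, and the converse --- that when $p \equiv 2 \mymod{3}$, $d$ is odd and $3 \nmid d$ every admissible $G_0$ is irreducible --- follows because then no $\F_{p^e}$ with $e \mid d$ contains a primitive cube root of unity and no order-$3$ field automorphism is available, so no invariant subspace can exist. For the converse direction of the theorem I would, in each regime, exhibit an explicit $G_0$ (for instance $L = \GL(d,p)$ with $\langle M \rangle$ when $p \not\equiv 2 \mymod{3}$; a Singer torus $\F_{p^d}^{\times} \leq \GamL(1,p^d)$ when $d$ is even; and a group semilinear over a degree-$3$ subfield when $3 \mid d$), verify arc-transitivity and reducibility as above, and count the resulting invariant $d$-subspaces to read off $k$ (two eigenlines when $p \equiv 1 \mymod{3}$ or in the even-$d$ Singer case, and a single invariant subspace when $p = 3$). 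The hardest part is carrying out this reducibility analysis uniformly over \emph{all} admissible $G_0$: ruling out, via Clifford theory together with the behaviour of endomorphism fields under $- \otimes_{\F_p} \F_p^2$ and under Frobenius twists, every exotic embedding of the $S_3$-action into $\GL(d,p)$ that could create an unexpected invariant subspace, or in the converse destroy all of them.
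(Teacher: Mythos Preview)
Your strategy is correct and reaches Theorem~\ref{theorem:mainthm1}, but the route differs meaningfully from the paper's. The paper does not argue via tensor products or endomorphism fields at all: it first classifies the admissible $G_{\mathbf 0}$ into five Goursat types (Table~\ref{table:G_0}), then for each type computes the $G_{\mathbf 0}$-orbits on $V^\#$ directly and determines which of them span a proper subspace $V_c$. This reduces to the explicit equations $c^{p^i}=-c^{-1}(c+1)$ and $c^{p^j}=-c(c+1)^{-1}$ over $\F_{p^d}$, whose solution sets $C_1,C_2$ are analysed by elementary but delicate gcd arguments (Lemmas~\ref{lemma:divisibility}--\ref{lemma:C1-C2-obs}). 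Theorem~\ref{theorem:mainthm1} then falls out as a short corollary of the much finer Theorem~\ref{theorem:maintheorem}. Your approach instead parametrises the $d$-dimensional $K$-submodules of $V=U^2$ by $\mathbb P^1(E)$ with $E=\mathrm{End}_K(U)=\F_{p^e}$, on which the $3$-cycle $x$ (or its twist $gx$) acts either $E$-linearly with characteristic polynomial $t^2+t+1$, or $\sigma$-semilinearly for a Galois automorphism $\sigma$ of $E$ of order dividing $3$; existence of a fixed line then reduces to $3\mid p^e-1$ or $3\mid e$ for some $e\mid d$, which is exactly the trichotomy in the statement. This is cleaner for Theorem~\ref{theorem:mainthm1} since only existence is at stake, while the paper's orbit calculus is what is actually needed for Theorems~\ref{theorem:mainthm2} and~\ref{theorem:maintheorem}, where one must \emph{count} the invariant $V_c$ for every $G_{\mathbf 0}$, including the types containing an involution $hy$.

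Two small corrections. First, the parastrophe involution stabilising a given parallel class acts \emph{trivially} on it (check $x^2y$ in Table~\ref{table:autoparatopisms}), so your ``extended by at most one parastrophe involution'' is vacuous; the elementary-abelian conclusion comes straight from transitivity of the image $L\leq\Aut(H)$ on $H^\#$, exactly as in Theorem~\ref{theorem:arctrans-elemab}. Second, for the converse with $3\mid d$ you should spell out the twisted construction: with $K$ a Singer torus $\F_{p^d}^\times$ one needs $g$ to be a Frobenius power of order $3$ in $\Aut(\F_{p^d})$ (so $g=\tau_p^{d/3}$), or equivalently, as the paper does, take $n=d/3$, $q=p^3$, $K\leq\GL_{d/3}(p^3)$ and $g$ with $\alpha(g)=\tau_p$; either way the condition $g^3\in K$ is then genuinely satisfied, which your sketch leaves implicit.
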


\begin{theorem} \label{theorem:mainthm2}
Let $\Gamma = \lsg(H)$ and $G \leq \Aut(\Gamma)$, where $H = C^d_p$ for some prime $p$ and positive integer $d$, and $G$ is a subgroup of autoparatopisms of the Cayley table of $H$. If $\Gamma$ is $G$-arc-transitive and $G$-quotient-complete then any nontrivial complete $G$-normal quotient of $\Gamma$ has $p^d$ vertices. Furthermore, $\Gamma$ has exactly $k$ nontrivial complete $G$-normal quotients, where $k = 1$ or $k \geq 3$ if $p = 3$, and $k \geq 2$ if $p \neq 3$.
\end{theorem}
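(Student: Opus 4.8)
The plan is to establish the structure of latin square graphs $\lsg(H)$ for $H = C_p^d$ and understand the $G$-normal quotients combinatorially. The latin square graph $\Gamma = \lsg(H)$ has vertex set $H \times H$ (the cells of the Cayley table), with two cells adjacent if they share a row, a column, or a symbol. The complete normal quotients correspond to equitable partitions of the vertex set into "parallel classes" — in the Bruck net / latin square picture, each of the three natural parallelisms (rows, columns, symbols) gives a partition into $p^d$ classes of size $p^d$, and collapsing one parallel class yields a complete graph on $p^d$ vertices. So the first step is to verify that each nontrivial complete $G$-normal quotient has exactly $p^d$ vertices, which should follow from the fact that for $H$ elementary abelian the relevant normal subgroups of $G$ act as translation-type subgroups whose orbits are precisely the lines of the associated net.

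Let me think about the counting of $k$.

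**The key step is then counting $k$.** The number of distinct nontrivial complete $G$-normal quotients equals the number of $G$-invariant parallelisms (equivalently, $G$-invariant partitions into lines) of the net. For a latin square of a group, the "lines" come in three families (rows, columns, symbols), but when $H$ is elementary abelian one can extract additional parallel classes because the Cayley table of $C_p^d$ underlies the affine/net structure $\mathrm{AG}(2,p^?)$-like configurations, and the autoparatopism group permutes these classes. I would first identify the full set of candidate complete quotients as the set of $G$-orbits on the parallel classes of the net $\mathcal{N}(H)$, then show that $G$-quotient-completeness forces $G$ to fix each such class (or permute them in a controlled way), so that $k$ equals the number of $G$-invariant parallel classes. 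The arithmetic dichotomy $p = 3$ versus $p \neq 3$ should emerge from how the "symbol" class interacts with the row and column classes under the autoparatopism action: the conjugation-by-inverse and the exponent-$3$ identities (reflecting the relation $x \mapsto x^{-1}$ and the map $x \mapsto x^2 = x^{-1}$ in characteristic $3$) merge or split classes. Concretely, when $p = 3$ the symbol parallelism can coincide with or be exchanged among the row/column parallelisms via the ternary structure, collapsing what would otherwise be two independent quotients into one, which yields $k = 1$ (or forces $k \geq 3$ when extra classes survive); when $p \neq 3$ no such coincidence occurs and at least the two "diagonal" parallel classes persist, giving $k \geq 2$.

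**The main technical work**, and the step I expect to be the hardest, is pinning down exactly which parallel classes are $G$-invariant under an arbitrary autoparatopism subgroup $G$ acting arc-transitively, and proving the clean arithmetic trichotomy. I would proceed as follows: \textbf{(i)} use the known description of the autoparatopism group of the Cayley table of $C_p^d$ (via \cite{Heinze}) to write $G$ as an extension of the translation group $H \times H$ by a subgroup of $\GL$-type transformations together with the $S_3$ of "conjugacy" permutations acting on the three roles of row/column/symbol; \textbf{(ii)} translate "complete $G$-normal quotient" into "$G$-invariant spread of lines", and count these spreads; \textbf{(iii)} analyze the action of the $S_3$-part: each element of $S_3$ either fixes a parallel class or permutes the three standard classes, and whether a nonstandard (diagonal) parallel class is $G$-invariant depends on whether the linear part acts as a scalar on the corresponding eigenspace over $\F_p$, which is exactly where $p \equiv 2 \pmod 3$ and the cube-root-of-unity condition enters. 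The obstacle is that the diagonal parallel classes are governed by solutions of $x^2 + x + 1 = 0$ over $\F_p$ (cube roots of unity), which exist iff $p \equiv 1 \pmod 3$ or $p = 3$ (a repeated root); carefully separating the semisimple case $p \equiv 1 \pmod 3$ (two distinct eigenvalues, giving two extra invariant classes and $k \geq 2$, even $k \geq 3$) from the non-semisimple prime $p = 3$ (repeated eigenvalue, forcing the two diagonal classes to coincide and yielding $k = 1$, with $k \geq 3$ possible only when the linear part introduces further fixed classes) is the crux, and I would handle it by an explicit eigenvalue/Jordan-form computation for the order-$3$ autoparatopism on $\F_p^2 \otimes \F_p^d$, being careful that the elementary-abelian hypothesis from Theorem \ref{theorem:mainthm1} is exactly what makes this linear-algebraic reduction valid.
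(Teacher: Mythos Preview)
Your high-level strategy matches the paper's: both reduce the problem to identifying the $G_{\mathbf{0}}$-invariant proper subspaces of $V = U \oplus U$ (your ``parallel classes''), showing they are all of the form $V_c = \{(u,cu) : u \in U\}$ for certain $c \in \F_q$, and then counting the admissible $c$ via the polynomial $c^2+c+1$. The paper in fact deduces Theorem~\ref{theorem:mainthm2} as an immediate corollary of the full classification (Theorem~\ref{theorem:maintheorem}), which rests on Lemmas~\ref{lemma:subspaces-V_ind} and~\ref{lemma:subspaces-V_dep}.

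However, your sketch has two genuine gaps. First, you do not explain why \emph{every} $G_{\mathbf{0}}$-invariant proper subspace must be some $V_c$; in the paper this is nontrivial and uses Hering's classification of transitive linear groups (Theorem~\ref{theorem:Hering}) together with results from \cite{AGP-quotcomp} to rule out subspaces meeting $V_{\text{$q$-ind}}$ (Lemma~\ref{lemma:subspaces-V_ind}). Without this, you cannot conclude that every complete quotient has exactly $p^d$ vertices. Second, your analysis of the admissible $c$ is too coarse: you work with roots of $c^2+c+1$ over $\F_p$, but the stabilizer $G_{\mathbf{0}}$ in general involves field automorphisms (Table~\ref{table:G_0}, lines 2, 4, 5), and the correct invariance condition is membership in the Frobenius-twisted sets $C_1(p,d,i)$, $C_2(p,d,j)$ of (\ref{equation:C1(p,d,i)})--(\ref{equation:C2(p,d,i)}). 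In particular, the $k \geq 3$ possibility (for $p=3$ as well as for $p \neq 3$) arises not from ``the linear part introducing further fixed classes'' in your sense, but from the line-2 case $G_{\mathbf{0}} = \langle K, gx\rangle$ with $3 \mid |\alpha(g)|$, where one obtains $k = p^{(d,m)}+1$ values of $c$ coming from Lemma~\ref{lemma:C1-C2}(2); your eigenvalue/Jordan-form picture on $\F_p^2 \otimes \F_p^d$ does not see this phenomenon. The exclusion of $k=2$ when $p=3$ then follows because every route to $k=2$ in Table~\ref{table:V_c} requires an element $c$ with $|c|=3$, which is impossible in characteristic~$3$.
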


The possible pairs $(\Gamma,G)$ such that $\Gamma$ is $G$-arc-transitive and $G$-quotient-complete are described in Theorem \ref{theorem:G_0} and Theorem \ref{theorem:maintheorem}.

The rest of this paper is organized as follows. In Section \ref{section:prelims} we present some background on quotient-complete graphs and latin square graphs, as well as some technical results on divisibility and finite fields that will be useful in the next section. In Section \ref{section:arctranslsg} we establish general facts about arc-transitive latin square graphs from finite groups; in particular we show that the underlying group must be elementary abelian. In Section \ref{section:quotcomplsg} we determine which elementary abelian groups and their corresponding automorphism groups yield quotient-complete arc-transitive graphs, and prove Theorems \ref{theorem:maintheorem}, \ref{theorem:mainthm1}, and \ref{theorem:mainthm2}.
\section{Preliminaries} \label{section:prelims}

\begin{notation}
If $S$ is a group, vector space, or finite field, $S^\#$ denotes the set of non-identity or nonzero elements of $S$. If $n$ is a positive integer, $\Sym(\Omega)$ and $\Sym(n)$ denote, respectively, the symmetric group on a set $\Omega$ and the symmetric group on $n$ letters.
\end{notation}

\subsection{Quotient-complete graphs}

Given a graph $\Gamma$, a group $G \leq \Aut(\Gamma)$, and $N \unlhd G$, the \emph{$G$-normal quotient} $\Gamma_N$ of $\Gamma$ is the graph whose vertices are the $N$-orbits in $\V(\Gamma)$ and with adjacency defined as follows: two $N$-orbits $O_1$ and $O_2$ are adjacent in $\Gamma_N$ exactly when there are vertices $v_1 \in O_1$ and $v_2 \in O_2$  such that $v_1$ and $v_2$ are adjacent in $\Gamma$. The quotient group $G/N$ is a subgroup of $\Aut(\Gamma_N)$. A normal quotient graph inherits some of the properties of the original graph, such as connectedness, vertex-transitivity, and arc-transitivity. In the case where $\Gamma$ is connected, the normal quotient $\Gamma_N$ has diameter not exceeding that of $\Gamma$. Thus, if $\diam(\Gamma) = 2$, then either $\Gamma_N$ is a complete graph or $\diam(\Gamma_N) = 2$. 
It is proved in \cite[Theorem 2.2]{AGP-quotcomp} that quotient-complete graphs arise naturally as normal quotients of vertex-transitive graphs.

Let $k$ be the number of distinct nontrivial complete $G$-normal quotients of $\Gamma$. It was shown in \cite{AGP-quotcomp} that if $\Gamma$ is $G$-arc-transitive and $G$-quotient-complete with $k \geq 3$, then $|\V(\Gamma)| = c^2$ for some prime power $c$ and $|\V(\Gamma_N)| = c$ for any nontrivial $G$-normal-quotient $\Gamma_N$. Furthermore either $\Gamma$ is isomorphic to $c$ copies of the complete graph $K_c$ on $c$ vertices (in which case $k = c$), or $k = c' + 1$ for some divisor $c'$ of $c$. For the cases $k = 1$ and $k = 2$ infinite families of examples are obtained via the following two constructions.

\begin{example} \cite[Example 3.1]{AGP-quotcomp} \label{example:lexicographic}
Let $\Gamma = K_m \left[ \overline{K_n} \right]$ and $G = \Sym(n) \wr \Sym(m)$ for positive integers $m$ and $n$. The graph $\Gamma$ is the \emph{lexicographic product} of $K_m$ and the empty graph $\overline{K_n}$ on $n$ vertices; the vertex set is $\{1, \ldots, m\} \times \{1, \ldots, n\}$ and the edges are the pairs $\{(i,j), (i',j')\}$ where $i \neq i'$. Then $\Gamma$ is a connected graph with $G = \Aut(\Gamma)$. The graph $\Gamma$ is $G$-arc-transitive and $G$-quotient-complete with $k = 1$, and the unique $G$-normal quotient is $K_m$, which corresponds to $N = \Sym(n)^m$.
\end{example}

\begin{example} \cite[Example 3.2]{AGP-quotcomp} \label{example:directproduct}
Let $\Gamma = K_m \times K_n$ and $G = \Sym(m) \times \Sym(n)$ for positive integers $m$ and $n$. The graph $\Gamma$ is the \emph{direct product} of the complete graphs $K_m$ and $K_n$; the vertex set is $\{1, \ldots, m\} \times \{1, \ldots, n\}$ and the edges are the pairs $\{(i,j), (i',j')\}$ where $i \neq i'$ and $j \neq j'$. Then $\Gamma$ is a connected graph with $G \leq \Aut(\Gamma)$. The graph $\Gamma$ is $G$-arc-transitive and $G$-quotient-complete with $k = 2$. The two $G$-normal quotients are $K_m$ and $K_n$, which correspond to $M = \Sym(n)$ and $N = \Sym(m)$, respectively.
\end{example}

Observe that in Example \ref{example:directproduct} the quotient graphs $\Gamma_M$ and $\Gamma_N$ could have any number of vertices, possibly distinct; in this respect the case where $k = 2$ differs from the case where $k \geq 3$, since in the latter all nontrivial normal quotients have the same order.

In addition to Example \ref{example:directproduct}, another infinite family of quotient-complete graphs with $k = 2$ was introduced to the author by P. Spiga; we describe these in Example \ref{example:Cay}.

\begin{example} \label{example:Cay}
Let $\Gamma = \Cay(V,S)$ and $G = V \rtimes G_0 \leq \AGL(V)$, where $V = \F_q \oplus \F_q$ (with elements written as ordered pairs), $q \geq 5$ is a power of an odd prime, $\F_q$ is the finite field of order $q$, 
	\[ G_0 = \left\{ \left( \begin{array}{cc} a & 0 \\ 0 & b \end{array} \right) \ \vline \ ab \in \F^\square_q \right\}, \]
where $\F^\square_q = \big\{ c^2 \ \big| \ 0 \neq c \in \F_q \big\}$, and $S = (1,1)^{G_0}$. For $q \geq 5$ the graph $\Gamma$ is connected with $G \leq \Aut(\Gamma)$. The graph $\Gamma$ is connected, $G$-arc-transitive, and $G$-quotient-complete with $k = 2$ if $q \geq 5$. Each $G$-normal quotient is isomorphic to $K_q$ and corresponds to the subgroup of translations by elements of $\F_q \oplus \{0\}$ and of $\{0\} \oplus \F_q$. 
\end{example}

If $q = 7$ then $\Gamma \cong \Gamma' := \Cay(V,S')$ where $S' = \big\{ (a,0), (0,a), (a,-a) \ \big| \ a \in \F^\#_7 \big\}$. Indeed, $f := \left( \begin{array}{cc} -1 & 3 \\ 2 & 4 \end{array} \right) \in \GL_2(7)$ induces an automorphism from $\Gamma$ to $\Gamma'$; the group
	\[ f^{-1} G_0 f = \left\langle Z(\GL_2(7)), \ \left(\begin{array}{cc} 1 & -1 \\ 1 & 0 \end{array}\right) \right\rangle \]
stabilizes the vertex $(0,0)$ in $\Gamma'$. We can generalize this to obtain another infinite family of examples with $k \leq 2$, which we describe in Example \ref{example:lsg}. It is easy to see, by comparing valencies, that in general these graphs are distinct from those described above.

\begin{example} \label{example:lsg}
Let $\Gamma = \Cay(V,S)$ and $G = V \rtimes G_0 \leq \AGL(V)$, where $V = \F_q \oplus \F_q$, $q$ a prime power,
	\[ G_0 = \left\langle Z(\GL_2(q)), \ \left(\begin{array}{cc} 1 & -1 \\ 1 & 0 \end{array}\right) \right\rangle, \]
and $S = (1,0)^{G_0} = \big\{ (a,0), (0,a), (a,-a) \ \big| \ a \in \F^\#_q \big\}$. Then $\Gamma$ is connected $G$-arc-transitive, and $G$ has an intransitive minimal normal subgroup exactly when $q \not\equiv 2 \mymod{3}$. In particular, the intransitive minimal normal subgroups of $G$ are subgroups of translations $T_W$ by elements of a subspace $W \leq V$, where $W = \big\{ (a,ca) \ \big| \ a \in \F^\#_q \big\}$ with $c^2 + c + 1 = 0$. Hence $\Gamma$ is $G$-quotient-complete with $k = 1$ if $q \equiv 0 \mymod{3}$ and $k = 2$ if $q \equiv 1 \mymod{3}$. All quotient graphs corresponding to these subgroups $T_W$ are complete graphs $K_q$.
\end{example}

All graphs in Example \ref{example:lsg} are \emph{latin square graphs}, which we discuss in the next subsection.

\subsection{Latin square graphs from groups} \label{subsection:lsg}

Let $n$ be a positive integer. A \emph{latin square} $L$ of order $n$ is an $n \times n$ array of $n$ symbols, such that no symbol occurs twice in the same row or in the same column. If $H$ denotes the set of $n$ symbols, then, by labelling the rows and columns by elements of $H$, each cell in $L$ can be represented by an ordered triple $(h_1, h_2, h_3)$ of elements of $H$, where $h_1$ denotes the row label, $h_2$ the column label, and $h_3$ the symbol contained in the cell. An \emph{autoparatopism} of $L$ is an element of $\Sym(H) \wr \Sym(3)$, in its product action on $H^3$, which preserves $L$ setwise; an \emph{autotopism} of $L$ is an autoparatopism which belongs in $\Sym(H)^3$. Thus an autotopism is an ordered triple of permutations acting on the set of row labels, the set of column labels, and the set of symbols, while an autoparatopism consists of an autotopism followed by a permutation of the three coordinates. In particular, we denote any autoparatopism of $L$ by $[(\sigma_1, \sigma_2, \sigma_3), \gamma]$, where $(\sigma_1, \sigma_2, \sigma_3) \in \Sym(H)^3$ and $\gamma \in \Sym(3)$, with action given by
	\begin{equation} \label{eq:action}
	(h_1, h_2, h_3)^{[(\sigma_1, \sigma_2, \sigma_3), \gamma]} = \left(h_{1'}^{\sigma_{1'}}, h_{2'}^{\sigma_{2'}}, h_{3'}^{\sigma_{3'}}\right), \quad i' := i^{\gamma^{-1}} \; \forall\; i \in \{1,2,3\}
	\end{equation}
for any $(h_1, h_2, h_3) \in L$. The set of all autoparatopisms of $L$ forms a subgroup of $\Sym(H) \wr \Sym(3)$, having as normal subgroup the group of all autotopisms.

A \emph{latin square graph} is a graph $\Gamma$ whose vertices are the cells of a latin square $L$, and whose edges are those pairs of cells which lie in the same row, in the same column, or contain the same symbol. In other words, if $L$ is viewed as a subset of $H^3$, then the edges of $\Gamma$ are those pairs of triples that agree in exactly one coordinate. If $H$ is the set of symbols, we denote $\Gamma$ by $\lsg(H)$. Each autoparatopism of $L$ induces an automorphism of its latin square graph. It was shown in \cite{Bruck} that $\Aut(\Gamma)$ coincides with the group of autoparatopisms of $L$ whenever $|H| \geq 5$. 

From now on we assume that $H$ is a group and that $\lsg(H)$ is the latin square graph of the Cayley table of $H$. Then the vertices of $\lsg(H)$ are triples $(h_1,h_2,h_1h_2)$ for all $h_1, h_2 \in H$, and hence can be associated with $H^2$. The neighbors of any vertex $(a, b, ab)$ are the vertices $(a, h, ah)$ for all $b \neq h \in H$, $(h, b, hb)$ for all $a \neq h \in H$, and $\left(ah, h^{-1}b, ab\right)$ for all $1_H \neq h \in H$. In addition, it can be shown that $\lsg(H)$ is isomorphic to the Cayley graph $\Cay(H^2,S)$ on $H^2$ with
	\[ S = \big\{ (1_H, h), (h, 1_H), (h, h) \ | \ 1_H \neq h \in H \big\}, \]
which is the graph with vertex set $H^2$ and edges $\{s,t\}$ where $st^{-1} \in S$, via the map which sends any $(a, b, ab) \in \V(\Gamma)$ to $\left(a, b^{-1}\right) \in H^2$. The autotopism group of $\lsg(H)$ is generated by the triples $(\lambda_a, \rho_b, \lambda_a\rho_b)$ for any $a, b \in H$, where
	\begin{equation} \label{equation:regularaction}
	\lambda_a : h \mapsto a^{-1}h \quad \text{and} \quad \rho_b : h \mapsto hb \quad \text{for all $h \in H$},
	\end{equation}
and $(\sigma, \sigma, \sigma)$ for any $\sigma \in \Aut(H)$. The elements $(\lambda_a, \rho_b, \lambda_a\rho_b)$ generate a group $T$ isomorphic to $H^2$, and the elements $(\sigma, \sigma, \sigma)$ generate a group $\overline{\Aut(H)}$ that normalizes $T$. Hence the autotopism group of $\lsg(H)$ is isomorphic to $H^2 \rtimes \Aut(H)$, where $\Aut(H)$ acts on $H^2$ componentwise (\cite[Proposition 2]{Bailey}). The elements $x$ and $y$ given in Table \ref{table:autoparatopisms}, where $\iota$ is the identity in $\Sym(H)$ and $\phi \in \Sym(H)$ is defined by
	\begin{equation} \label{equation:maptoinverse}
	\phi : h \mapsto h^{-1} \quad \text{for all $h \in H$},
	\end{equation}
are autoparatopisms of $\lsg(H)$ which generate a subgroup isomorphic to $\Sym(3)$. The autoparatopism group of $\lsg(H)$ is the group $\mathcal{G}$ generated by $T$, $\overline{\Aut(H)}$, and the elements $x$ and $y$. Thus $\mathcal{G} \cong \left(H^2 \rtimes \Aut(H)\right) \Sym(3) \leq \Aut(\lsg(H))$, and it follows from the above that $\mathcal{G} = \Aut(\lsg(H))$ if and only if $|H| \geq 5$.

\begin{table}
\begin{center}
\begin{tabular}{rcll}
\hline
\multicolumn{3}{c}{Autoparatopism} & Image of $(a, b, ab)$ \\
\hline\hline
$x$ & : & $[ (\iota, \phi, \phi), (1\,2\,3) ]$ & $\left(b^{-1}a^{-1}, a, b^{-1}\right)$ \\
$x^2$ & : & $[ (\phi, \iota, \phi), (1\,3\,2) ]$ & $\left(b, b^{-1}a^{-1}, a^{-1}\right)$ \\
$y$ & : & $[ (\iota, \phi, \iota), (1\,3) ]$ & $\left(ab, b^{-1}, a\right)$ \\
$xy$ & : & $[ (\phi, \phi, \phi), (1\,2) ]$ & $\left(b^{-1}, a^{-1}, b^{-1}a^{-1}\right)$ \\
$x^2y$ & : & $[ (\phi, \iota, \iota), (2\,3) ]$ & $\left(a^{-1}, ab, b\right)$ \\
\hline
\end{tabular}
\caption{Some autoparatopisms of the latin square of $H$} \label{table:autoparatopisms}
\end{center}
\end{table}
\subsection{Some technical results on finite fields}

In this subsection we present some technical results on roots of certain polynomials in finite fields, which will be used in Section \ref{section:quotcomplsg}.

We begin with some elementary divisibility results. For nonzero integers $r$ and $s$, the symbol $(r,s)$ denotes the greatest common divisor of $r$ and $s$.

\begin{lemma} \label{lemma:divisibility}
Let $a$, $r$, and $s$ be positive integers with $a \geq 2$. Then
	\begin{enumerate}[1.]
	\item \label{div2} $\big(a^r - 1, \ a^s + 1\big) = 
		\left\{\begin{aligned}
		&a^{(r,s)} + 1 &&\text{if $r/(r,s)$ is even} \\
		&2 &&\text{if $r/(r,s)$ and $a$ are odd} \\
		&1 &&\text{if $r/(r,s)$ is odd and $a$ is even}
		\end{aligned}\right.$
	\item \label{div3} $\big(a^r - 1, \ a^{2s} + a^s + 1\big) =
		\left\{\begin{aligned}
		&a^{2(r,s)} + a^{(r,s)} + 1 &&\text{if $3 \mid r/(r,s)$ and $s/(r,s)$ is even}; \\
		&3 &&\text{if $3 \nmid r/(r,s)$, and either $a \equiv 1 \mymod{3}$, or} \\[-3pt]
		&  &&\text{$a \equiv 2 \mymod{3}$ and $r$ and $s$ are even}; \\
		&1 &&\text{otherwise.}
		\end{aligned}\right.$
	\item \label{div4} $\big(a^r + 1, \ a^{2s} + a^s + 1\big) =
		\left\{\begin{aligned}
		&a^{2(r,s)} - a^{(r,s)} + 1 &&\text{if $3 \mid r/(r,s)$ and $s/(r,s)$ is even}; \\
		&3 &&\text{if $3 \nmid r/(r,s)$, $s/(r,s)$ is even, $r$ is odd,} \\[-3pt]
		&  &&\text{and $a \equiv 2 \mymod{3}$}; \\
		&1 &&\text{otherwise.}
		\end{aligned}\right.$
	\end{enumerate}
\end{lemma}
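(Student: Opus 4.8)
The plan is to reduce every statement to the single classical identity
\[ \gcd(a^m - 1,\ a^n - 1) = a^{(m,n)} - 1, \]
which I would establish first, either by running the Euclidean algorithm on the exponents (writing $m = qn + t$ and using $a^m - 1 \equiv a^t - 1 \pmod{a^n - 1}$) or by comparing the multiplicative order of $a$ modulo each side. Throughout I set $g = (r,s)$ and write $r = g r'$ and $s = g s'$ with $(r', s') = 1$; the parities of $r'$ and $s'$ and their residues modulo $3$ will drive the case analysis.

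For part~\ref{div2} I would rewrite the factor as $a^s + 1 = (a^{2s} - 1)/(a^s - 1)$. Since $a^s + 1 \mid a^{2s} - 1$, the quantity $d := (a^r - 1,\ a^s + 1)$ divides $(a^r - 1,\ a^{2s} - 1) = a^{(r,2s)} - 1$, and because $a^{(r,2s)} - 1 \mid a^r - 1$ this upgrades to $d = (a^{(r,2s)} - 1,\ a^s + 1)$. A short computation gives $(r,2s) = 2g$ when $r'$ is even and $(r,2s) = g$ when $r'$ is odd. In the odd case $d = (a^g - 1,\ a^s + 1)$ divides $(a^s - 1,\ a^s + 1) = (2,\ a^s - 1)$, which forces $d \in \{1,2\}$ according to the parity of $a$. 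In the even case coprimality makes $s'$ odd, so $a^g + 1 \mid a^s + 1$; factoring $a^{2g} - 1 = (a^g - 1)(a^g + 1)$ and checking that $(a^g - 1,\ (a^s + 1)/(a^g + 1)) = 1$ (the quotient is congruent to $1$ modulo $a^g - 1$ when $s'$ is odd) yields $d = a^g + 1$.

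Parts~\ref{div3} and~\ref{div4} run through the same template, using the factorizations $a^{3s} - 1 = (a^s - 1)(a^{2s} + a^s + 1)$ and $a^{3s} + 1 = (a^s + 1)(a^{2s} - a^s + 1)$ in place of $a^{2s} - 1$. Setting $y = a^g$ and recognizing $a^{2g} + a^g + 1 = \Phi_3(y)$ and $a^{2g} - a^g + 1 = \Phi_6(y)$, the required divisibilities reduce to the standard cyclotomic criteria $\Phi_3(y) \mid \Phi_3(y^{s'})$ exactly when $3 \nmid s'$, and $\Phi_6(y) \mid \Phi_3(y^{s'})$ exactly when $s'$ is even and $3 \nmid s'$; combined with the criteria $a^{3g} - 1 \mid a^r - 1$ (needing $3 \mid r'$) and $a^{3g} + 1 \mid a^r + 1$ (needing $3 \mid r'$ with $r'/3$ odd, which $s'$ even supplies via coprimality), these exhibit the stated main factors. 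I expect the decisive work---and the origin of the residue conditions $a \equiv 1,2 \pmod{3}$ and the parity conditions on $r$ and $s$---to be the final accounting of the leftover small prime: one must decide precisely when the factor $3$ common to $a^g - 1$ and $a^{2g} + a^g + 1$ (which satisfy $a^{2g} + a^g + 1 \equiv 3 \pmod{a^g - 1}$) actually persists in the gcd. This is cleanly resolved by a lifting-the-exponent computation comparing the $3$-adic valuations of $\Phi_3(y^{s'})$ and $\Phi_3(y)$, with the parallel $2$-adic computation governing the $\{1,2\}$ dichotomy in part~\ref{div2}.
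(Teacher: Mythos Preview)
Your reduction is the same as the paper's: both start from $\gcd(a^m-1,a^n-1)=a^{(m,n)}-1$, bound the desired gcd by $a^{(r,2s)}-1$ (respectively $a^{(r,3s)}-1$), and then split cases on $r'=r/(r,s)$ and $s'=s/(r,s)$. For part~\ref{div2} your argument is essentially identical; in the even-$r'$ case the paper finishes slightly more slickly by noting $a^{2g}\equiv 1$ and $a^s\equiv -1\pmod D$ together with $s'$ odd force $a^g\equiv -1\pmod D$, whereas you factor $a^{2g}-1$ and check a coprimality---both work, and your coprimality step does need the prime-by-prime check you allude to rather than a one-line deduction.

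For parts~\ref{div3} and~\ref{div4} the strategies diverge in presentation rather than substance. The paper stays entirely elementary: it reduces $a^s$ modulo $D$ to a power of $a^g$ using $a^{3g}\equiv\pm 1\pmod D$, and then computes $a^{2s}+a^s+1\pmod D$ directly in each residue class of $s'$ modulo $3$ (or $6$). You instead name the relevant factors as $\Phi_3(y)$ and $\Phi_6(y)$ and appeal to the standard divisibility criteria for cyclotomic values, with a lifting-the-exponent step to pin down the residual factor of $3$. Your route is correct and arguably more conceptual, but it imports machinery (LTE, cyclotomic divisibility) that the paper avoids; the paper's bare-hands modular arithmetic reaches the same conclusions with no outside input beyond the single gcd identity.
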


\begin{proof}
For part \ref{div2} let $D = \big(a^r - 1, \ a^s + 1\big)$. Then $D \,\,\big|\, \big(a^r - 1, \ a^{2s} - 1\big)$, where $\big(a^r - 1, \ a^{2s} - 1\big) = a^{(r,2s)} - 1$ by an easy exercise in elementary number theory (see, for instance, \cite[Exercise 2.2 (6.)]{Rosen}). If $r/(r,s)$ is odd then $(r,2s) = (r,s)$, so that $D \,\,\big|\, a^{(r,s)} - 1$. Hence $a^s + 1 \equiv 2 \mymod{D}$ and thus $D \mid 2$. In particular, $D = 1$ if $a$ is even and $D = 2$ if $a$ is odd. If $r/(r,s)$ is even then $s/(r,s)$ is odd, and it follows that $a^{(r,s)} + 1 \,\,\big|\, D$. Also $(r,2s) = 2(r,s)$ so that $a^{2(r,s)} \equiv 1 \mymod{d}$. Since $a^s \equiv -1 \mymod{D}$ this implies that $a^{(r,s)} \equiv -1 \mymod{D}$. Therefore $D \,\,\big|\, a^{(r,s)} + 1$, and hence $D = a^{(r,s)} + 1$.

For the remainder we will use the easily verified facts that $a^2 + a + 1 \,\,\big|\, a^{2r} + a^r + 1$ exactly when $3 \nmid r$, and $a^2 - a + 1 \,\,\big|\, a^{2r} + a^r + 1$ exactly when $r$ is even and $3 \nmid r$.

For part \ref{div3} let $D = \big(a^r - 1, \ a^{2s} + a^s + 1\big)$. Then $D \,\,\big|\, \big(a^r - 1, \ a^{3s} - 1\big) = a^{(r,3s)} - 1$. If $3 \mid r/(r,s)$ then $3 \nmid s/(r,s)$, so that $a^{2(r,s)} + a^{(r,s)} + 1 \,\,\big|\, D$. Also $(r,3s) = 3(r,s)$, so that $a^{3(r,s)} \equiv 1 \mymod{D}$ and $a^s \equiv a^{t(r,s)} \mymod{D}$, $t \in \{1,2\}$. It follows that $a^{2s} + a^s + 1 \equiv a^{2(r,s)} + a^{(r,s)} + 1 \mymod{D}$, and hence $D \,\,\big|\, a^{2(r,s)} + a^{(r,s)} + 1$. Therefore $D = a^{2(r,s)} + a^{(r,s)} + 1$. If $3 \nmid r/(r,s)$ then $(r,3s) = (r,s)$ so that $a^{(r,s)} \equiv 1 \mymod{D}$ and $a^{2s} + a^s + 1 \equiv 3 \mymod{D}$. Hence $D \mid 3$. In particular, $D = 3$ if and only either if $a \equiv 1 \mymod{3}$, or $a \equiv 2 \mymod{3}$ and both $r$ and $s$ are even.

For part \ref{div4} let $D = \big(a^r + 1, \ a^{2s} + a^s + 1\big)$. Then $D \,\,\big|\, \big(a^r + 1, \ a^{3s} - 1\big)$. If $s/(r,s)$ is odd then so is $3s/(r,3s)$, and it follows from part \ref{div2} that $D \mid 2$. But $a^{2s} + a^s + 1$ is odd, hence $D = 1$. Suppose that $s/(r,s)$ is even. Then so is $3s/(r,3s)$, and by part \ref{div2} we have $\big(a^r + 1, \ a^{3s} - 1\big) = a^{(r,3s)} + 1$. Also $r/(r,s)$ is odd. If $3 \mid r/(r,s)$ then $3 \nmid s/(r,s)$, so $a^{2(r,s)} - a^{(r,s)} + 1 \,\,\big|\, D$. Furthermore $(r,3s) = 3(r,s)$, so that $a^{3(r,s)} \equiv -1 \mymod{D}$ and $a^s \equiv a^{t(r,s)} \mymod{D}$, $t \in \{2,4\}$. Hence $a^{2s} + a^s + 1 \equiv a^{2(r,s)} - a^{(r,s)} + 1$, and so $D \,\,\big|\, a^{2(r,s)} - a^{(r,s)} + 1$. Thus $D = a^{2(r,s)} - a^{(r,s)} + 1$. If $3 \nmid r/(r,s)$ then $(r,3s) = (r,s)$, so that $a^{2s} + a^s + 1 \equiv 3 \mymod{D}$. Hence $D \mid 3$, and since $s$ is even, we have $D = 3$ if and only if $a \equiv 2 \mymod{3}$ and $r$ is odd.
\end{proof}

Let $\omega$ be a primitive element in the finite field $\F_q$ with order $q = p^d$, $p$ prime, and let $\tau_p$ be the Frobenius automorphism on $\F_q$. Then $\left|\tau^i_p\right| = d/(d,i)$ and $|\Fix(\langle \tau^i_p \rangle)| = p^{(d,i)}$. Define the sets
	\begin{equation} \label{equation:C1(p,d,i)}
	C_1(p,d,i) := \left\{ c \in \F^\#_q \ \vline \ c^{p^i} = c^{\tau^i_p} = -c^{-1}(c+1) \right\}.
	\end{equation}
and
	\begin{equation} \label{equation:C2(p,d,i)}
	C_2(p,d,i) := \left\{ c \in \F^\#_q \ \vline \ c^{p^i} = c^{\tau^i_p} = -c(c+1)^{-1} \right\}.
	\end{equation}
Note that any $c \in C_1(p,d,i)$ satisfies $c^{p^{2i}} = c^{\tau^{2i}_p} = -(c+1)^{-1}$ and $c^{\tau^{3i}_p} = c$, and any $c \in C_2(p,d,i)$ satisfies $c^{\tau^{2i}_p} = c$.

For any subfields $\mathbb{E}$ and $\mathbb{K}$ of $\F_q$ with $\mathbb{K} \leq \mathbb{E}$, let $\Tr_{\mathbb{E}/\mathbb{K}}$ denote the trace map from $\mathbb{E}$ to $\mathbb{K}$.

\begin{lemma} \label{lemma:C1-C2}
Let $p$ be a prime, $d$ and $i$ positive integers, and $q = p^d$. Let $\tau_p$ be the Frobenius map on $\F_q$ and let $C_1(p,d,i)$ and $C_2(p,d,i)$ be as in (\ref{equation:C1(p,d,i)}) and (\ref{equation:C2(p,d,i)}), respectively.
	\begin{enumerate}[1.]
	\item If $\left|\tau^i_p\right| \not\equiv 0 \mymod{3}$, then $C_1(p,d,i) = \big\{ c \in \F^\#_q \ \big| \ |c| = 3 \big\}$ if $p^{(d,i)} \equiv 1 \pmod{3}$, $C_1(p,d,i) = \{1\}$ if $p = 3$, and $C_1(p,d,i) = \varnothing$ otherwise.
	\item If $\left|\tau^i_p\right| \equiv 0 \mymod{3}$ then $C_1(p,d,i) = \big\{ b^{p^i-1} \ \big| \ b \neq 0, \ b + b^{p^i} + b^{p^{2i}} = 0 \big\}$ and $|C_1(p,d,i)| = p^{(d,i)} + 1$.
	\item If $\left|\tau^i_p\right|$ is odd then $C_2(p,d,i) = \varnothing$ if $p = 2$ and $C_2(p,d,i) = \{-2\}$ if $p \geq 3$.
	\item If $\left|\tau^i_p\right|$ is even then $C_2(p,d,i) = \big\{ b-1 \ \big| \ b \neq 1, \ b^{p^i + 1} = 1 \big\}$ and $|C_2(p,d,i)| = p^{(d,i)}$.
	\end{enumerate}
\end{lemma}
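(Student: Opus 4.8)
The plan is to rewrite each defining condition as a single polynomial equation and then exploit the two Frobenius identities recorded just before the statement. For $C_1(p,d,i)$ the condition $c^{\tau_p^i} = -c^{-1}(c+1)$ is equivalent to $c^{p^i+1}+c+1 = 0$, and since every $c \in C_1(p,d,i)$ satisfies $c^{\tau_p^{3i}}=c$, such $c$ lies in $\Fix(\langle \tau_p^{3i}\rangle)=\F_{p^{(3i,d)}}$; for $C_2(p,d,i)$ the condition reads $c^{p^i+1}+c^{p^i}+c=0$, and $c^{\tau_p^{2i}}=c$ confines $c$ to $\F_{p^{(2i,d)}}$. Writing $e=(d,i)$, $K:=\F_{p^e}=\Fix(\langle\tau_p^i\rangle)$, and $m=\left|\tau_p^i\right|=d/(d,i)$, a short computation (using $(m,n)=1$ for $i=en$, $d=em$) gives $(3i,d)=e$ when $3\nmid m$ and $(3i,d)=3e$ when $3\mid m$, and $(2i,d)=e$ when $m$ is odd and $(2i,d)=2e$ when $m$ is even. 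This splits each set into a \emph{degenerate} case, where $\tau_p^i$ already fixes $c$, and a case where $\tau_p^i$ acts on the relevant subfield as an automorphism of order $3$ (for $C_1$) or $2$ (for $C_2$) over $K$.

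In the degenerate cases I would observe that $c^{\tau_p^i}=c$ collapses the defining equation to one over the prime field. For part 1 this is $c^2+c+1=0$, whose roots are the elements of multiplicative order $3$ when $p\neq 3$ and the single element $c=1$ when $p=3$; order-$3$ elements lie in $\F_{p^e}$ (equivalently in $\F_q$, since there are at most two and they already sit in the subfield) precisely when $p^e\equiv 1 \pmod 3$, which yields the three subcases. For part 3 the collapsed equation is $c(c+2)=0$, forcing $c=-2$, which is nonzero exactly when $p$ is odd.

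For part 2 I would use the recorded values $c^{\tau_p^i}=-c^{-1}(c+1)$ and $c^{\tau_p^{2i}}=-(c+1)^{-1}$ to compute, by direct multiplication, the norm $N(c)=c^{1+p^i+p^{2i}}=c\cdot c^{\tau_p^i}\cdot c^{\tau_p^{2i}}=1$ of the cyclic cubic extension $\F_{p^{3e}}/K$. Hilbert's Theorem~90 then lets me write $c=b^{p^i-1}$, and substituting back shows the defining equation is equivalent to $\Tr_{\F_{p^{3e}}/K}(b)=b+b^{p^i}+b^{p^{2i}}=0$, giving the stated description. For the cardinality, the trace is a surjective $K$-linear map, so its kernel has $p^{2e}$ elements; the fibres of $b\mapsto b^{p^i-1}$ are the cosets $bK^\times$, which lie entirely inside the trace-zero hyperplane by $K$-linearity, so $|C_1(p,d,i)|=(p^{2e}-1)/(p^e-1)=p^e+1$. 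Part 4 is the quadratic analogue: the substitution $b=c+1$ turns the condition directly into $b^{p^i+1}=1$, i.e.\ the norm of $\F_{p^{2e}}/K$ equals $1$ with $b\neq 1$; the norm-one elements number $p^e+1$, and removing $b=1$ leaves $|C_2(p,d,i)|=p^e$, with $c=b-1$ a bijection onto $C_2(p,d,i)$.

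The main work is part 2: establishing the Hilbert~90 parametrization and, above all, the fibre count. The delicate point is verifying that the trace-zero set is a union of \emph{complete} fibres of $b\mapsto b^{p^i-1}$, so that dividing $p^{2e}-1$ by $|K^\times|=p^e-1$ is legitimate; this is exactly where $K$-linearity of the trace enters. By comparison, the order-and-existence bookkeeping in parts 1 and 3 (deciding when cube roots of unity lie in $\F_{p^e}$ and separating characteristics $p=2$ and $p=3$) is routine, and part 4 follows the same pattern as part 2 but is simpler, since the substitution $b=c+1$ makes the norm condition immediate and the counting direct.
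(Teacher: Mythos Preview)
Your argument is correct, and parts 1 and 3 match the paper's proof essentially verbatim. The difference lies in parts 2 and 4. Where you invoke Hilbert's Theorem~90 for the cubic extension $\F_{p^{3e}}/K$ to produce $b$ with $c=b^{p^i-1}$, the paper instead writes $c=\omega^r$ and uses its divisibility Lemma (the preceding result on $\gcd(a^r-1,\,a^{2s}+a^s+1)$) to force $p^{(d,i)}-1\mid r$, whence $c\in\langle\omega^{p^{(d,i)}-1}\rangle=\langle\omega^{p^i-1}\rangle$; similarly in part 4 the paper counts the solutions of $b^{p^i+1}=1$ in $\F_q^\times$ as $\gcd(p^d-1,\,p^i+1)$ and evaluates this gcd via the same lemma, rather than appealing to surjectivity of the norm on $\F_{p^{2e}}/K$. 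Your Galois-theoretic route is cleaner and makes the structure transparent (norm one, trace zero, fibres $=bK^\times$), at the cost of invoking Hilbert~90; the paper's route is entirely elementary and self-contained, since the gcd lemma has been set up precisely for this purpose and is reused in the next lemma. One small point worth making explicit in your write-up: the parametrising $b$ you obtain lies in $\F_{p^{3e}}$, so the set $\{b^{p^i-1}:b\in\F_{p^{3e}}^\#,\ \Tr(b)=0\}$ you describe is a priori only contained in the set in the statement (where $b$ ranges over $\F_q^\#$); equality follows because the reverse inclusion $\{b^{p^i-1}:b+b^{p^i}+b^{p^{2i}}=0\}\subseteq C_1(p,d,i)$ holds for all $b\in\F_q^\#$ by direct substitution, and both sets equal $C_1(p,d,i)$.
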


\begin{proof}
Assume that $\left|\tau^i_p\right| \not\equiv 0 \mymod{3}$. Then $c \in C_1(p,d,i)$ if and only if either $c^{\tau^i_p} = c$ or $c^{\tau^{2i}_p} = c$; both are equivalent to $c^{p^i} = c$ and $c^2 + c + 1 = 0$. Statement 1 follows.

Assume now that $3 \,\big|\, |\tau^i_p|$. It is easy to verify that $\big\{ b^{p^i-1} \ \big| \ b \in \F^\#_q, \ b + b^{p^i} + b^{p^{2i}} = 0 \big\} \subseteq C_1(p,d,i)$. Let $c = \omega^r \in C_1(p,d,i)$. Then
	\[ c^{1 + p^i + p^{2i}} = c \cdot \big(-c^{-1}(c+1)\big) \cdot \big(-(c+1)^{-1}\big) = 1, \]
so $r\big(1 + p^i + p^{2i}\big) \equiv 0 \mymod{p^d-1}$. Thus $r \equiv 0 \mymod{\big(p^d - 1\big)/\big(p^d - 1, \ p^{2i} + p^i + 1\big)}$. By Lemma \ref{lemma:divisibility} part \ref{div3} we have $\big(p^d - 1, \ p^{2i} + p^i + 1\big) = p^{2(d,i)} + p^{(d,i)} + 1$. Also $p^{3(d,i)} - 1 \nmid p^{2(d,i)} + p^{(d,i)} + 1$ but $p^{3(d,i)} - 1 \,\big|\, p^d - 1$, so $p^{(d,i)} - 1 \,\big|\, \big(p^d - 1\big)/\big(p^{2(d,i)} + p^{(d,i)} + 1\big)$. It follows that $r \equiv 0\bigmod{p^{(d,i)} - 1}$, so $c \in \big\langle \omega^{p^{(d,i)}-1} \big\rangle = \big\langle \omega^{p^i-1} \big\rangle$. Thus $c = b^{p^i - 1}$ for some $b \in \F^\#_q$. Since $c \in C_1(p,d,i)$, we have
	\[ b^{p^{2i}-p^i} = \big(b^{p^i-1}\big)^{p^i} = -b^{-(p^i-1)}\big(b^{p^i-1} + 1\big), \]
which yields $b + b^{p^i} + b^{p^{2i}} = 0$. Therefore $C_1(p,d,i) = \big\{ b^{p^i-1} \ \vline \ b \neq 0, \ b + b^{p^i} + b^{p^{2i}} = 0 \big\}$. To compute $|C_1(p,d,i)|$ let $\mathbb{E} = \Fix(\langle \tau^{3i}_p \rangle)$ and $\mathbb{K} = \Fix(\langle \tau^i_p \rangle)$. Observe that $b^{p^i-1} \in C_1(p,d,i)$ implies that $1 = \big(b^{p^i-1}\big)^{p^{2i} + p^i + 1} = b^{p^{3i} - 1}$, so $b \in \mathbb{E}$, and in this case $b + b^{p^i} + b^{p^{2i}} = \Tr_{\mathbb{E}/\mathbb{K}}(b)$. Note also that for any $b, b_0 \in \mathbb{E}$, we have $b^{p^i-1} = b_0^{p^i-1}$ if and only if $bb_0^{-1} \in \mathbb{K}$. Thus
	\[ |C_1(p,d,i)|
	= \frac{\left|\left\{ b \in \mathbb{E}^\# \ \vline \ \Tr_{\mathbb{E}/\mathbb{K}}(b) = 0 \right\}\right|}{\left|\mathbb{K}^\#\right|}
	= \frac{p^{2(d,i)} - 1}{p^{(d,i)} - 1}
	= p^{(d,i)} + 1, \]
which completes the proof of statement 2.

Assume that $\left|\tau^i_p\right|$ is odd. Then $(d,2i) = (d,i)$. Hence $c \in C_2(p,d,i)$ if and only if $c^{\tau^i_p} = -c(c+1)^{-1}$ and $c \in \Fix(\langle \tau^{2i}_p \rangle) = \Fix(\langle \tau^i_p \rangle)$. Equivalently $c = -c(c+1)^{-1} \neq 0$, which holds if and only if $c = -2$ and $p \neq 2$. This proves statement 3.

Finally, assume that $\left|\tau^i_p\right|$ is even. It is easy to check that $\big\{ b-1 \ \vline \ b \neq 1, \ b^{p^i + 1} = 1 \big\} \subseteq C_2(p,d,i)$. Let $c \in C_2(p,d,i)$ and $b := c + 1$. Then $b \neq 1$ and
	\[ b^{p^i+1} = \big(c^{p^i} + 1\big)(c + 1) = 1. \]
Thus $C_2(p,d,i) = \big\{ b-1 \ \vline \ b \neq 1, \ b^{p^i + 1} = 1 \big\}$. Applying Lemma \ref{lemma:divisibility} part \ref{div2}, we obtain
	\[ |C_2(p,d,i)| = \gcd\big(p^d - 1, \ p^i + 1\big) - 1 = p^{(d,i)} + 1 - 1 = p^{(d,i)}, \]
which completes the proof of statement 4.
\end{proof}

\begin{lemma} \label{lemma:C1-C2-obs}
Let $p$ be a prime, $d$, $i$, and $j$ be positive integers, and $q = p^d$. Let $\tau_p$ be the Frobenius map on $\F_q$ and let $C_1(p,d,i)$ and $C_2(p,d,i)$ be as in (\ref{equation:C1(p,d,i)}) and (\ref{equation:C2(p,d,i)}), respectively.
	\begin{enumerate}[1.]
	\item If $\left|\tau^i_p\right| \not\equiv 0 \mymod{3}$, or $\left|\tau^i_p\right| \equiv 0 \mymod{3}$ and $j/(i,j) \not\equiv 0 \mymod{3}$, then
		\[ C_1(p,d,i) \cap \Fix(\langle \tau^j_p \rangle) =
			\left\{\begin{aligned}
			&\left\{ c \ \vline \ |c| = 3 \right\} &&\text{if $p^{(d,i)} \equiv p^{(d,j)} \equiv 1 \mymod{3}$;} \\
			&\{1\} &&\text{if $p = 3$;} \\
			&\varnothing &&\text{otherwise.}
			\end{aligned}\right.\]
	\item If $\left|\tau^i_p\right| \equiv 0 \mymod{3}$ and $j/(i,j) \equiv 0 \mymod{3}$ then
		\[ C_1(p,d,i) \cap \Fix(\langle \tau^j_p \rangle) =
			\left\{\begin{aligned}
			&C_1(p,d,(i,j)) &&\text{if $i/(i,j) \equiv 1 \mymod{3}$;} \\
			&\left\{ c \ \vline \ c^{-1} \in C_1(p,d,(i,j)) \right\} &&\text{if $i/(i,j) \equiv 2 \mymod{3}$.}
			\end{aligned}\right. \]
	\item $C_1(p,d,i) \cap C_2(p,d,j) =
		\left\{\begin{aligned}
		&\{ c \ \vline \ |c| = 3 \} &&\text{if $p \equiv 2 \mymod{3}$, $\left|\tau^j_p\right|$ and $i$ are even, and $j$ is odd;} \\
		&\{1\} &&\text{if $p = 3$;} \\
		&\varnothing &&\text{otherwise.}
		\end{aligned}\right.$
	\end{enumerate}
\end{lemma}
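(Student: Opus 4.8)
The plan is to read the two conditions defining $C_1$ and $C_2$ as statements about the M\"obius maps $\mu,\nu\in\mathrm{PGL}_2(\F_q)$ given by $\mu(x)=-x^{-1}(x+1)$ and $\nu(x)=-x(x+1)^{-1}$: thus $c\in C_1(p,d,i)$ exactly when $c^{\tau^i_p}=\mu(c)$, and $c\in C_2(p,d,j)$ exactly when $c^{\tau^j_p}=\nu(c)$. A direct computation gives $\mu^3=\nu^2=\mathrm{id}$, that the fixed points of $\mu$ are the elements satisfying $c^2+c+1=0$ while the only nonzero fixed point of $\nu$ is $-2$, and that $\nu$ interchanges the two order-$3$ elements $c,c^2$ whenever these exist. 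Since $\mu$ and $\nu$ have prime-field coefficients they commute with $\tau_p$, so $c^{\tau^i_p}=\mu(c)$ forces $c^{\tau^{ki}_p}=\mu^k(c)$ for all $k$, and similarly $c^{\tau^{kj}_p}=\nu^k(c)$. These identities are what I would use throughout.

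For Parts 1 and 2 I would fix $c\in C_1(p,d,i)\cap\Fix(\langle\tau^j_p\rangle)$, put $e=(i,j)$ and $g=\tau^e_p$, and write $\tau^i_p=g^{i/e}$, $\tau^j_p=g^{j/e}$. The length $n$ of the $\langle g\rangle$-orbit of $c$ divides both $3i/e$ (because $c^{\tau^{3i}_p}=c$) and $j/e$ (because $c\in\Fix(\langle\tau^j_p\rangle)$), hence $n\mid\gcd(3i/e,\,j/e)=\gcd(3,\,j/e)$, using $\gcd(i/e,j/e)=1$. In Part 1, if $\left|\tau^i_p\right|\not\equiv 0\pmod 3$ then Lemma \ref{lemma:C1-C2}(1) already shows $C_1(p,d,i)$ consists only of $\mu$-fixed points; and if $\left|\tau^i_p\right|\equiv 0\pmod 3$ with $3\nmid j/e$ then $\gcd(3,j/e)=1$ forces $n=1$, so $g$ fixes $c$ and again $\mu(c)=c^{\tau^i_p}=c$. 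Either way $c$ is $\mu$-fixed, hence an order-$3$ element (or $c=1$ when $p=3$); such an element lies in $C_1(p,d,i)$ iff $p^{(d,i)}\equiv 1\pmod 3$ and in $\Fix(\langle\tau^j_p\rangle)$ iff $p^{(d,j)}\equiv 1\pmod 3$, which gives the stated trichotomy. For Part 2, $3\mid j/e$ gives $n\mid 3$; choosing $s,t$ with $s\,(i/e)+t\,(j/e)=1$ yields $c^{g}=\mu^{s}(c)$ with $s\equiv(i/e)^{-1}\pmod 3$, so $c^g=\mu(c)$ when $i/e\equiv 1$ and $c^g=\mu^2(c)$ when $i/e\equiv 2$; these say respectively that $c\in C_1(p,d,e)$ and that $c^{-1}\in C_1(p,d,e)$, and the reverse inclusions follow by running the same computation with $\tau^i_p=g^{i/e}$ and $\tau^j_p=g^{j/e}$.

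The genuinely new difficulty is Part 3, where both $\mu$ and $\nu$ are active. The starting point is that $\langle\mu,\nu\rangle\cong\Sym(3)$ inside $\mathrm{PGL}_2(\F_q)$, since $\mu$ has order $3$, $\nu$ has order $2$, and one checks $\nu\mu\nu=\mu^{-1}$. I would then argue as follows. Let $\mathcal{O}$ be the orbit of $c$ under $g=\tau^{(i,j)}_p$, a single $\langle g\rangle$-cycle. Because $\mu$ and $\nu$ commute with $g$ and agree with $g^{i/(i,j)}$ and $g^{j/(i,j)}$ respectively at $c$, they agree with these powers of $g$ at every point of $\mathcal{O}$; hence $\mu|_{\mathcal{O}}$ and $\nu|_{\mathcal{O}}$ lie in the cyclic group $\langle g|_{\mathcal{O}}\rangle$. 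Thus the action of $\Sym(3)=\langle\mu,\nu\rangle$ on $\mathcal{O}$ has abelian image and factors through $\Sym(3)^{\mathrm{ab}}\cong C_2$; as $\mu$ generates the commutator subgroup $\Alt(3)$, it must act trivially on $\mathcal{O}$, so $\mu(c)=c$. Therefore every element of $C_1(p,d,i)\cap C_2(p,d,j)$ satisfies $c^2+c+1=0$. It then remains to decide which order-$3$ elements actually occur: such a $c$ lies in $C_1(p,d,i)$ iff $c^{p^i}=\mu(c)=c$, i.e. $p^{(d,i)}\equiv 1\pmod 3$, and in $C_2(p,d,j)$ iff $c^{p^j}=\nu(c)=c^2$, i.e. $p^j\equiv 2\pmod 3$. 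For $p\neq 3$ these reconcile with the lemma by the bookkeeping that order-$3$ elements exist in $\F_q$ iff $3\mid p^d-1$, that for $p\equiv 2\pmod 3$ the condition $p^{(d,i)}\equiv 1$ means $(d,i)$ even (equivalently $d$ and $i$ even), that $p^j\equiv 2$ means $j$ odd, and that for $j$ odd $\left|\tau^j_p\right|=d/(d,j)$ is even iff $d$ is even; together these reduce to $p\equiv 2\pmod 3$, both $\left|\tau^j_p\right|$ and $i$ even, and $j$ odd, and in that case both $c$ and $c^2$ survive (using $\nu(c^2)=c$). For $p=3$ the equation degenerates to $c=1$, which always lies in the intersection, giving $\{1\}$. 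The main obstacle I anticipate is exactly this $\Sym(3)$-obstruction step together with the final translation of the two congruences into the parity conditions of the statement; the rest reuses the orbit-length argument from Parts 1 and 2.
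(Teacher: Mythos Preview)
Your argument is correct and takes a genuinely different route from the paper. The paper proves all three parts by reducing to explicit divisibility statements and then invoking the technical gcd identities of Lemma~\ref{lemma:divisibility}: for instance, in Part~3 it first isolates a claim about when $C_2(p,d,j)$ contains an element of order~$3$, and then runs a three-case analysis (splitting on the parity of $|\tau^j_p|$ and on whether $3$ divides $|\tau^i_p|$, with further subcases), each time bounding $|c|$ by a gcd of the form $\big(p^r\pm 1,\ p^{2s}+p^s+1\big)$ computed via Lemma~\ref{lemma:divisibility}. Your approach instead packages the two defining conditions as $c^{\tau^i_p}=\mu(c)$ and $c^{\tau^j_p}=\nu(c)$ for the M\"obius maps $\mu,\nu$ generating a copy of $\Sym(3)$ in $\mathrm{PGL}_2(\F_p)$, and for Parts~1--2 replaces the gcd calculus by a clean orbit-length argument under $g=\tau^{(i,j)}_p$. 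The real gain is in Part~3: your observation that $\mu|_{\mathcal O}$ and $\nu|_{\mathcal O}$ both lie in the cyclic group $\langle g|_{\mathcal O}\rangle$, so that the $\Sym(3)$-action on $\mathcal O$ factors through its abelianization and hence $\mu$ fixes $c$, dispatches in one stroke what the paper handles by several pages of case-by-case gcd bookkeeping. The cost is that you must verify the $\Sym(3)$ relations $\mu^3=\nu^2=1$, $\nu\mu\nu=\mu^{-1}$ and the commutation with Frobenius, but these are one-line checks; the paper's route, by contrast, reuses Lemma~\ref{lemma:divisibility}, which it needed to prove anyway for Lemma~\ref{lemma:C1-C2}. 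Your final translation of the congruences $p^{(d,i)}\equiv 1$ and $p^j\equiv 2\pmod 3$ into the stated parity conditions on $|\tau^j_p|$, $i$, and $j$ is also correct.
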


\begin{proof}

We first show part 1. If $\left|\tau^i_p\right| \not\equiv 0 \mymod{3}$ then the result follows easily from part 1 of Lemma \ref{lemma:C1-C2}. Assume that $\left|\tau^i_p\right| \equiv 0 \mymod{3}$ and $j/(i,j) \not\equiv 0 \mymod{3}$. Then $c \in C_1(p,d,i) \cap \Fix(\langle \tau^j_p \rangle)$ implies that $c^{p^{2i} + p^i + 1} = c^{p^j - 1} = 1$, so that $|c|$ divides $\big(p^j - 1, \ p^{2i} + p^i + 1\big)$. If $p = 3$ then $\big(p^j - 1, \ p^{2i} + p^i + 1\big) = 1$ by Lemma \ref{lemma:divisibility} part \ref{div3}, implying that $c = 1$, which is indeed in $C_1(p,d,i)$. If $p \equiv 2 \mymod{3}$ and either $i$ or $j$ is odd then also $\big(p^j - 1, \ p^{2i} + p^i + 1\big) = 1$ and $c = 1$, but $1 \notin C_1(p,d,i)$. If $p \equiv 1 \mymod{3}$, or $p \equiv 2 \mymod{3}$ and both $i$ and $j$ are even then $\big(p^j - 1, \ p^{2i} + p^i + 1\big) = 3$; since $1 \notin C_1(p,d,i)$ this implies that $|c| = 3$. Clearly $c \in C_1(p,d,i)$ if and only if $p^{(d,i)} \equiv 1 \mymod{3}$. This proves statement 1.

Assume that $\left|\tau^i_p\right| \equiv 0 \mymod{3}$ and $j/(i,j) \equiv 0 \mymod{3}$. Then $i/(i,j) \not\equiv 0 \mymod{3}$. \emph{Case 1.} Suppose that $i/(i,j) \equiv 1 \mymod{3}$. Recall that any $c \in C_1(p,d,i)$ is fixed by $\tau^{3i}_p$. Then $c \in C_1(p,d,i) \cap \Fix(\langle \tau^j_p \rangle)$ implies that $|c| \,\big|\, \big(p^{3i} - 1, \ p^j - 1\big) = p^{3(i,j)} - 1$. Hence
	\[ c^{p^{(i,j)}} = c^{p^i} = -c^{-1}(c+1), \]
and thus $c \in C_1(p,d,(i,j))$. Conversely, any $c \in C_1(p,d,(i,j))$ is fixed by $\tau^{3(i,j)}_p$, so
	\[ c^{p^i} = c^{p^{(i,j)}} = -c^{-1}(c+1). \]
Also $c$ is fixed by $\tau^j_p$ since $3(i,j) \mid j$. Thus $c \in C_1(p,d,i) \cap \Fix(\langle \tau^j_p \rangle)$, which proves the first part of statement 2. \emph{Case 2.} Suppose that $i/(i,j) \equiv 2 \mymod{3}$. As in Case 1, $c \in C_1(p,d,j) \cap \Fix(\langle \tau^j_p \rangle)$ implies that $\big|c^{-1}\big| = |c| \,\big|\, p^{3(i,j)} - 1$. It is easy to show that $c^{-1} \in C_1(p,d,2i)$. So $c^{-1} \in C_1(p,d,2i) \cap \Fix\big(\big\langle \tau^{3(i,j)}_p \big\rangle\big)$, where $2i/(2i,3(i,j)) = 2i/(i,j) \equiv 1 \mymod{3}$. It follows from Case 1 that $c^{-1} \in C_1(p,d,(2i,3(i,j))) = C_1(p,d,(i,j))$. Conversely, if $c^{-1} \in C_1(p,d,(i,j))$ then $c \in C_1(p,d,2(i,j))$. Furthermore $c^{-1}$ is fixed by $\tau^{3(i,j)}_p$, and hence so is $c$. Thus
	\[ c^{p^i} = c^{p^{2(i,j)}} = -c^{-1}(c+1), \]
so $c \in C_1(p,d,i)$; also $c^{p^j} = c$. Therefore $c \in C_1(p,d,i) \cap \Fix(\langle \tau^j_p \rangle)$, which proves the second part of statement 2.

For statement 3 we need the following result: \emph{Claim.} If $\big|\tau^j_p\big|$ is even, then $C_2(p,d,j)$ has an element of order $3$ if and only if $p \equiv 2 \mymod{3}$ and $j$ is odd. Indeed, if $c \in C_2(p,d,i)$ with $|c| = 3$ then $p \neq 3$, $c = b - 1$ for some $b \in \F^\#_q$ with $b^{p^i + 1} = 1$, and $c^2 + c + 1 = 0$. Hence $b^2 - b + 1 = 0$, so that $b^3 = -1$ and $|b| \mid \big(p^i + 1, \ 6)$. Now if $p \equiv 1 \mymod{3}$ or $i$ is even then $\big(p^i + 1, \ 6) = 2$ yielding $b = -1$ and $p = 3$, contradiction. Thus $p \equiv 2 \mymod{3}$ and $i$ is odd. Conversely assume that $p \equiv 2 \mymod{3}$ and $i$ is odd. If $p = 2$ take $c = b - 1$ where $|b| = 3$, and if $p > 2$ take $c = b - 1$ where $|b| = 6$. In each case it is easy to verify that $b \neq 1$, $b^{p^i + 1} = 1$, and $c^3 = 1$. This proves part 3.

We now prove statement 3. For brevity let $\overline{C} = C_1(p,d,i) \cap C_2(p,d,j)$. \emph{Case 1.} Suppose that $\big|\tau^j_p\big|$ is odd. Then by Lemma \ref{lemma:C1-C2} (3.) we get $\overline{C} = \varnothing$ if $p = 2$, and $\overline{C} = C_1(p,d,i) \cap \{-2\}$ if $p \geq 3$; in the latter case $\overline{C} = \{-2\} = \{1\}$ if $p = 3$ and $\overline{C} = \varnothing$ otherwise. \emph{Case 2.} Suppose that $\big|\tau^j_p\big|$ is even and $\big|\tau^i_p\big| \not\equiv 0 \mymod{3}$. Then by Lemma \ref{lemma:C1-C2} (1.) and the claim above,
	\begin{align*}
	\overline{C} 
	&= \left\{\begin{aligned}
			&\{ c \ | \ |c| = 3 \} \cap C_2(p,d,j) &&\text{if $p^{(d,i)} \equiv 1 \mymod{3}$}; \\
			&\{1\} &&\text{if $p = 3$}; \\
			&\varnothing &&\text{otherwise}
			\end{aligned}\right. \\
	&= \left\{\begin{aligned}
			&\{ c \ | \ |c| = 3 \} &&\text{if $p^{(d,i)} \equiv 1 \mymod{3}$, $p \equiv 2 \mymod{3}$, and $j$ is odd}; \\
			&\{1\} &&\text{if $p = 3$}; \\
			&\varnothing &&\text{otherwise}
			\end{aligned}\right.
	\end{align*}
\emph{Case 3.} Suppose that $\big|\tau^j_p\big|$ is even and $\big|\tau^i_p\big| \equiv 0 \mymod{3}$. Then $c \in \overline{C}$ implies that $|c| \,\big|\, \big(p^{2i} + p^i + 1, \ p^{2j} - 1, \ p^i + p^j\big)$. \emph{Subcase 3.1.} Assume that either $i$ is odd, or $i$ and $j$ are even and $i/(i,j)$ is odd. Then $i/(i, \ |i-j|) = i/(i,j)$ is odd, and applying Lemma \ref{lemma:divisibility} (\ref{div4}.) we get $\big(p^i + p^j, \ p^{2i} + p^i + 1\big) = \big(p^{|i-j|} + 1, \ p^{2i} + p^i + 1\big) = 1$. Hence $\overline{C} = \{1\}$ if $p = 3$ and $\overline{C} = \varnothing$ otherwise. \emph{Subcase 3.2.} Assume that $i$ is even, and either $j$ is odd or $i/(i,j)$ is even. Then $j/(i,j)$ is odd, implying that $|i-j|/(j, \ |i-j|)$ is odd and $(2j, \ |i-j|) = (i,j)$. By Lemma \ref{lemma:divisibility} (\ref{div2}.) we have $\big(p^{2j} - 1, \ p^i + p^j\big) = \big(p^{2j} - 1, \ p^{|i-j|} + 1\big) = p^{(i,j)} + 1$. Hence $|c| \,\big|\, \big(p^{(i,j)} + 1, \ p^{2i} + p^i + 1\big)$. Since $(i,j)/((i,j), \ i) = 1 \not\equiv 0 \mymod{3}$ it follows from Lemma \ref{lemma:divisibility} (\ref{div4}.) that $|c| = 3$ if $p \equiv 2\mymod{3}$ and $j$ is odd, and $c = 1$ otherwise. Thus, from the claim, we have
	\[ \overline{C} =
			\left\{\begin{aligned}
			&\{ c \ | \ |c| = 3 \} &&\text{if $p \equiv 2 \mymod{3}$ and $j$ is odd}; \\
			&\{1\} &&\text{if $p = 3$}; \\
			&\varnothing &&\text{otherwise}.
			\end{aligned}\right. \]
Statement 3 follows.
\end{proof}
\section{Arc-transitive latin square graphs from groups} \label{section:arctranslsg}

Throughout this section we assume that $H$ is a finite group, $\Gamma = \lsg(H)$ is the latin square graph of $H$, and $\mathcal{G}$ is the autoparatopism group of the Cayley table of $H$. Recall from Subsection \ref{subsection:lsg} that $\mathcal{G} \cong \left(H^2 \rtimes \Aut(H)\right) \Sym(3)$. Let $\lambda_a$ and $\rho_b$ be as in (\ref{equation:regularaction}), and let $T$ be the autotopism subgroup of $\Gamma$, that is, $T = \left\{ (\lambda_a, \rho_b, \lambda_a\rho_b) \ \vline \ a, b \in H \right\}$. For each $\sigma \in \Aut(H)$ and $A \leq \Aut(H)$, identify $\sigma$ with $(\sigma, \sigma, \sigma) \in \Sym(H)^3$ and $A$ with $\left\{ (\sigma,\sigma,\sigma) \ | \ \sigma \in A \right\}$. For any $G \leq \mathcal{G}$, denote the point stabilizer in $G$ of any $v \in \V(\Gamma)$ by $G_v$.

We begin by finding necessary conditions in order for the group $\mathcal{G}$ to be arc-transitive on $\Gamma$. Let $x$ and $y$ be as in Table \ref{table:autoparatopisms} It is easy to see that $\mathcal{G}_\mathbf{1} = \langle x, \, y, \, \Aut(H) \rangle \cong \Aut(H) \times \Sym(3)$. Hence:


\begin{theorem} \label{theorem:arctrans-elemab}
Let $H$ be a finite group, $\Gamma = \lsg(H)$, and $\mathcal{G}$ the autoparatopism group of the Cayley table of $H$. Then $\Gamma$ is $\mathcal{G}$-arc-transitive if and only if $H$ is elementary abelian.
\end{theorem}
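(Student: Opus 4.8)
The plan is to use vertex-transitivity to reduce arc-transitivity to a transitivity condition on the neighborhood of a single vertex, and then to identify that condition with transitivity of $\Aut(H)$ on $H^\#$. Since $T = \left\{ (\lambda_a,\rho_b,\lambda_a\rho_b) : a,b \in H \right\} \le \mathcal{G}$ acts regularly on the vertex set $\V(\Gamma) \cong H^2$, the graph $\Gamma$ is $\mathcal{G}$-vertex-transitive; hence $\Gamma$ is $\mathcal{G}$-arc-transitive if and only if the stabilizer $\mathcal{G}_\mathbf{1}$ of the cell $\mathbf{1} = (1_H,1_H,1_H)$ is transitive on the set of neighbors of $\mathbf{1}$. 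Using the adjacency description in Subsection~\ref{subsection:lsg}, these neighbors split into three sets of size $|H|-1$ according to which coordinate they share with $\mathbf{1}$:
\[
\{(1_H,h,h) : h \in H^\#\},\quad \{(h,1_H,h) : h \in H^\#\},\quad \{(h,h^{-1},1_H) : h \in H^\#\},
\]
which I will call types $1$, $2$, and $3$.

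Next I would show that these three types are permuted transitively by the subgroup $\langle x,y\rangle \cong \Sym(3)$ of $\mathcal{G}_\mathbf{1}$. Concretely, using the image formulas in Table~\ref{table:autoparatopisms}, the $3$-cycle $x$ sends $(1_H,h,h) \mapsto (h^{-1},1_H,h^{-1})$, $(h,1_H,h)\mapsto (h^{-1},h,1_H)$, and $(h,h^{-1},1_H)\mapsto(1_H,h,h)$, so $x$ cyclically permutes types $1 \to 2 \to 3 \to 1$; thus $\Sym(3)$ is transitive on the three types. Since the types are blocks for $\mathcal{G}_\mathbf{1}$, it follows that $\mathcal{G}_\mathbf{1}$ is transitive on the whole neighborhood if and only if the setwise stabilizer of type $1$ is transitive on that type.

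The setwise stabilizer of type $1$ in $\mathcal{G}_\mathbf{1} \cong \Aut(H)\times\Sym(3)$ is $\Aut(H) \times \langle x^2y\rangle$, where $x^2y$ generates the point stabilizer $\langle(2\,3)\rangle$ of coordinate $1$ in $\Sym(3)$. A direct check shows that $x^2y:(a,b,ab)\mapsto(a^{-1},ab,b)$ fixes every type-$1$ neighbor $(1_H,h,h)$, while $\sigma\in\Aut(H)$ sends $(1_H,h,h)\mapsto(1_H,h^\sigma,h^\sigma)$. Identifying type $1$ with $H^\#$ via $h$, the action of this stabilizer on type $1$ is exactly the natural action of $\Aut(H)$ on $H^\#$. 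Hence $\Gamma$ is $\mathcal{G}$-arc-transitive if and only if $\Aut(H)$ is transitive on $H^\#$.

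Finally I would invoke the classical fact that $\Aut(H)$ is transitive on $H^\#$ precisely when $H$ is elementary abelian. For the forward direction, transitivity forces all nonidentity elements to share a common order, which must be a prime $p$ (otherwise a proper power would produce an element of strictly smaller order), so $H$ is a $p$-group of exponent $p$; moreover every characteristic subgroup is a union of $\Aut(H)$-orbits on $H$, namely $\{1_H\}$ and $H^\#$, so the nontrivial characteristic subgroup $Z(H)$ satisfies $Z(H)=H$, whence $H$ is abelian and $H \cong C_p^d$. The converse is immediate since $\Aut(C_p^d)=\GL_d(\F_p)$ is transitive on nonzero vectors. The only real work lies in the coordinate bookkeeping of the middle steps; the concluding group-theoretic equivalence is standard, so the main point is to verify cleanly that the neighborhood action of $\mathcal{G}_\mathbf{1}$ collapses onto the natural action of $\Aut(H)$ on $H^\#$.
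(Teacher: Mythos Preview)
Your proof is correct and follows essentially the same approach as the paper: reduce arc-transitivity to transitivity of $\mathcal{G}_\mathbf{1}$ on $\Gamma(\mathbf{1})$, and then identify this with transitivity of $\Aut(H)$ on $H^\#$. The paper does this by directly computing the $\mathcal{G}_\mathbf{1}$-orbit of a single neighbor $(1_H,a,a)$ and comparing it to $\Gamma(\mathbf{1})$, whereas you package the same computation via the block system of the three ``types''; your version is slightly more explicit in justifying that $\Aut(H)$ is transitive on $H^\#$ exactly when $H$ is elementary abelian, which the paper simply asserts.
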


\begin{proof}
Let $\Gamma(\mathbf{1})$ denote the set of all neighbors in $\Gamma$ of the vertex $\mathbf{1}$. Then $\Gamma(\mathbf{1}) = \big\{ (1, h, h), \, (h, 1, h),$ $(h, h^{-1}, 1) \ \big| \ h \in H^\# \big\}$. Since $\Gamma$ is $\mathcal{G}$-vertex-transitive, it is $\mathcal{G}$-arc-transitive if and only if $\mathcal{G}_{\mathbf{1}}$ is transitive on $\Gamma(\mathbf{1})$. Let $a \in H^\#$ and $v = (1, a, a)$. Then $v \in \Gamma(\mathbf{1})$ and $v^{\mathcal{G}_{\mathbf{1}}} = \big\{ (1, b, b), \, \left(b^{-1}, 1, b^{-1}\right),$ $\left(b, b^{-1}, 1\right) \ \big| \ b \in a^{\Aut(H)} \big\}$. If $H$ is elementary abelian then $\Aut(H)$ is transitive on $H^\#$, and thus $v^{\mathcal{G}_{\mathbf{1}}} = \Gamma(\mathbf{1})$. Conversely, suppose that $v^{\mathcal{G}_{\mathbf{1}}} = \Gamma(\mathbf{1})$. Then for any $c \in H^\#$ and $w = (1, c, c)$ there exists $\sigma \in \Aut(H)$ and $z \in \langle x, y \rangle$ such that $w = v^{\sigma z}$. Now $v^{\sigma z} \in \big\{ (1, b, b), \left(b^{-1}, 1, b^{-1}\right), \left(b, b^{-1}, 1\right) \ \big|$ $b = a^\sigma \big\}$, so $w = \left(1, b, b\right)$ and $c = b = a^\sigma$. Thus $\Aut(H)$ is transitive on $H^\#$ and $H$ is elementary abelian. Therefore $\mathcal{G}_{\mathbf{1}}$ is transitive on $\Gamma(\mathbf{1})$ if and only if $H$ is elementary abelian, and the result follows.
\end{proof}

In view of Theorem \ref{theorem:arctrans-elemab}, assume for the remainder of this paper that $H$ is elementary abelian, and shift to additive notation as follows:

\begin{notation}
Let $H = C^d_p$ for some prime $p$ and positive integer $d$. Denote the inverse of $h \in H$ by $-h$, the identity element of $H$ by $0_H$, and the vertex $\big(0_H, 0_H, 0_H\big)$ by $\mathbf{0}$. Then for any $a, b \in H$ we have $\lambda_a = \rho_{-a}$ and $\lambda_a\rho_b = \rho_{-a+b}$, so that
	\begin{equation} \label{equation:T}
	T = \left\{ (\rho_a, \rho_b, \rho_{a+b}) \ \vline \ a, b \in H \right\}.
	\end{equation}
Furthermore we can identify $H$ with a vector space $U$ of dimension $n$ over $\F_q$, where $n$ is a divisor of $d$ and $q = p^{d/n}$, and $H^2$ as $V = U \oplus U$ whose elements are written as ordered pairs in $U$. Following \cite{AGP-quotcomp} we denote some special subsets of $V$ as follows:
	\begin{align}
	V_\infty &= \left\{ (0_U,u) \ | \ u \in U \right\} = \{0_U\} \oplus U, \label{equation:V_inf} \\
	V_c &= \left\{ (u,cu) \ | \ u \in U \right\} \; \text{for any $c \in \F_q$}, \label{equation:V_c} \\
	V_{\text{$q$-ind}} &= \left\{ (u,v) \ | \ \{u,v\} \text{ linearly independent in $U$ } \right\} \ \text{(provided $n \geq 2$)} \label{equation:V_ind}
	\end{align}
Note that $V_0 = U \oplus \{0_U\}$ and ${\rm diag}(U \oplus U) = V_1$.
\end{notation}

We shall also frequently identify the elements of the group $\langle x, y \rangle$ with the induced permutation on coordinates. Hence $x$ is identified with $(1\,2\,3)$ and $y$ with $(1\,3)$.

With the notation above, $\Gamma \cong \Cay(V,S)$ where $S = \Gamma(\mathbf{0}) = V^\#_\infty \cup V^\#_0 \cup V^\#_{-1}$. Also $T$ is the translation group on $V$ and $\mathcal{G}_\mathbf{0} = \GL_d(p) \times \Sym(3)$. Any arc-transitive group $G \leq \mathcal{G}$ is of the form $G = TG_\mathbf{0}$ where $G_\mathbf{0}$ is transitive on $\Gamma(\mathbf{0})$. For the rest of this chapter we determine all transitive groups $G_\mathbf{0}$, beginning with some necessary conditions for transitivity on $\Gamma(\mathbf{0})$, which are given in Lemma \ref{lemma:arctrans-proj}.

\begin{lemma} \label{lemma:arctrans-proj}
Let $H$ be an elementary abelian group, $\Gamma = \lsg(H)$, $G \leq \mathcal{G} \leq \Aut(\Gamma)$, and $\pi : \mathcal{G}_{\mathbf{0}} \rightarrow \Aut(H)$ and $\theta : \mathcal{G}_{\mathbf{0}} \rightarrow \langle x, y \rangle$ be the natural projections. If $\Gamma$ is $G$-arc-transitive then $G = TG_\mathbf{0}$, where $\pi(G_\mathbf{0})$ is transitive on $H^\#$ and $\theta(G_\mathbf{0})$ is transitive on $\{1,2,3\}$.
\end{lemma}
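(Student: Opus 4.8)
The equality $G = TG_\mathbf{0}$ together with the transitivity of $G_\mathbf{0}$ on $\Gamma(\mathbf{0})$ is recorded in the remark immediately preceding the statement (and rests on the normality of $T$ in $\mathcal{G}$), so the substance to be proved is that transitivity of $G_\mathbf{0}$ on $\Gamma(\mathbf{0}) = V_\infty^\# \cup V_0^\# \cup V_{-1}^\#$ forces $\pi(G_\mathbf{0})$ to be transitive on $H^\#$ and $\theta(G_\mathbf{0})$ to be transitive on $\{1,2,3\}$. I will use that $\mathcal{G}_\mathbf{0} = \Aut(H) \times \langle x, y\rangle$, so that each $g \in G_\mathbf{0}$ factors as a commuting product of $\pi(g) \in \Aut(H)$ and $\theta(g) \in \langle x, y\rangle \cong \Sym(3)$.

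The plan is to exhibit the three sets $V_\infty^\#$, $V_0^\#$, $V_{-1}^\#$ as a block system for $G_\mathbf{0}$ on $\Gamma(\mathbf{0})$. Each of the lines $V_\infty = \{0_U\}\oplus U$, $V_0 = U \oplus \{0_U\}$, and the antidiagonal $V_{-1}$ is defined purely in terms of the group structure on $U$, so every diagonal automorphism $(\sigma,\sigma,\sigma)$, and hence every element of $\pi(G_\mathbf{0})$, stabilizes each of the three lines setwise (here it is important that no $\F_q$-linearity is needed, since the scalars $0,\infty,-1$ are intrinsic to the definitions (\ref{equation:V_inf})--(\ref{equation:V_c})). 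On the other hand $\langle x, y\rangle$ permutes the three coordinates (rows, columns, symbols) of the Cayley table, and a direct check from Table \ref{table:autoparatopisms} shows that this is precisely the natural faithful action of $\langle x, y\rangle \cong \Sym(3)$ on $\{V_\infty, V_0, V_{-1}\}$. Consequently any $g = \pi(g)\theta(g) \in G_\mathbf{0}$ carries each line to a line, with the induced permutation of the three lines given by $\theta(g)$; this establishes the block system and identifies the action on blocks with $\theta(G_\mathbf{0})$ acting on $\{1,2,3\}$.

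With the block system in hand the two conclusions follow from standard facts about transitive actions. First, transitivity of $G_\mathbf{0}$ on $\Gamma(\mathbf{0})$ forces transitivity on the three blocks, which is exactly transitivity of $\theta(G_\mathbf{0})$ on $\{1,2,3\}$. Second, the setwise stabilizer $\Stab_{G_\mathbf{0}}(V_0^\#)$ is transitive on the block $V_0^\#$. The key local computation is that the involution of $\langle x, y\rangle$ stabilizing $V_0$ (namely $y$) fixes $V_0$ \emph{pointwise}, as one reads off from the image of $(a,b,ab)$ under $y$ in Table \ref{table:autoparatopisms}; by the $\Sym(3)$-symmetry the analogous statement holds for each line. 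Hence every $g \in \Stab_{G_\mathbf{0}}(V_0^\#)$ acts on $V_0 \cong U \cong H$ through $\pi(g)$ alone, so the permutation group it induces on $V_0^\# \cong H^\#$ lies in $\pi(G_\mathbf{0})$. Transitivity on $V_0^\#$ therefore yields transitivity of $\pi(G_\mathbf{0})$ on $H^\#$, which completes the argument.

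I expect the main obstacle to be the bookkeeping in the second paragraph: namely verifying cleanly that the diagonal action preserves each distinguished line, and that the stabilizing involution fixes its line \emph{pointwise} rather than merely setwise, since it is exactly pointwise fixing that makes the within-block action factor through $\pi$ (an involution acting as $-1$ on the line, for instance, would not give $\pi(G_\mathbf{0})$-transitivity directly). Once these two facts are pinned down from the explicit images in Table \ref{table:autoparatopisms} and the definitions of the $V_c$, both transitivity conclusions are immediate from the block structure.
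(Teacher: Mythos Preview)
Your argument is correct. The paper's own proof and yours rest on the same two observations---that the diagonal $\Aut(H)$-action stabilizes each of $V_\infty$, $V_0$, $V_{-1}$ setwise, while $\langle x,y\rangle$ permutes them faithfully---but the packaging differs. The paper proceeds by direct element chasing: it fixes $v=(0,h,h)$, computes the three possible images $v^{\pi(g)\theta(g)}$ explicitly, and reads off that any target $(0,h_1,h_1)$ must arise from $h_1=h^{\pi(g)}$; then it exhibits specific $g_1,g_2$ witnessing $1^{\theta(g_1)}=2$ and $1^{\theta(g_2)}=3$. You instead recognize $\{V_\infty^\#,V_0^\#,V_{-1}^\#\}$ as a block system, identify the action on blocks with $\theta(G_\mathbf{0})$, and then invoke the standard fact that a block stabilizer acts transitively on its block, together with your (correct and essential) observation that the involution $y$ stabilizing $V_0$ fixes it \emph{pointwise}, so the within-block action factors through $\pi$. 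Your route is slightly more conceptual and makes clearer \emph{why} the argument works; the paper's is shorter and entirely self-contained (no appeal to block-system generalities). Either is fine here.
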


\begin{proof}
Assume that $\Gamma$ is $G$-arc-transitive. By the remarks above $G = TG_\mathbf{0}$ where $G_\mathbf{0}$ is transitive on $\Gamma(\mathbf{0})$. Let $h \in H^\#$ and consider the vertex $v = (0, h, h)$. Let $h_1 \in H^\#$. Then there exists $g \in G_\mathbf{0}$ such that $(0, h_1, h_1) = v^g$. Write $g$ as $\pi(g)\theta(g)$. If $h^{\pi(g)} = h_2$ then
	\[ v^g = (0, h_2, h_2)^{\theta(g)} \in \left\{ (0, h_2, h_2), (-h_2, 0, -h_2), (h_2, -h_2, 0) \right\}, \]
which implies that $(0, h_1, h_1) = (0, h_2, h_2)$. Thus $h_1 = h_2 = h^{\pi(g)}$, and $\pi(G_\mathbf{0})$ is transitive on $H^\#$. Also $(h, 0, h) = v^{g_1}$ for some $g_1 \in G_\mathbf{0}$. In this case $h^{\pi(g_1)} = h$ so that $v^{g_1} = v^{\theta(g)}$, and hence $1^{\theta(g_1)} = 2$. Similarly, $(h, -h, 0) = v^{g_2}$ for some $g_2 \in G_\mathbf{0}$, which yields $1^{\theta(g_2)} = 3$. Therefore $\theta(G_\mathbf{0})$ is transitive on $\{1,2,3\}$.
\end{proof}

It follows from Lemma \ref{lemma:arctrans-proj} that the projections $\pi(G_\mathbf{0})$ are finite linear groups. The transitive finite linear groups were classified by C. Hering in \cite{Hering}; we state  this result in Theorem \ref{theorem:Hering}.

\begin{theorem} \cite{Hering} \label{theorem:Hering}
Let $U$ be a vector space of dimension $d$ over the prime field of order $p$, and let $X \leq \GL_d(p)$ be transitive on $U^\#$. Then $X$ is one of the types given in Table \ref{table:translin}, or, setting $q = p^{d/n}$ for some divisor $n$ of $d$, the group $X$ belongs to one of the following classes:
	\begin{enumerate}[1.] \itemsep0pt
	\item $X \leq \GamL_1(q)$, $n = 1$;
	\item $X \unrhd \SL_n(q)$, $n \geq 2$;
	\item $X \unrhd \Sp_n(q)$, $d$ and $n$ even;
	\item $X \unrhd G_2(q)$, $n = 6$ and $p = 2$.
	\end{enumerate}
\end{theorem}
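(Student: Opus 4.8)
The statement is Hering's classification \cite{Hering}, which is a deep consequence of the classification of finite simple groups (CFSG); accordingly I can only outline the structure of a proof rather than give a self-contained argument. The plan is to pass to the affine permutation group $U \rtimes X$ acting on $U$: since $X$ is transitive on $U^\#$, this group is $2$-transitive on $U$, so that classifying the transitive linear groups is equivalent to determining the affine $2$-transitive groups. The engine throughout is the generalized Fitting subgroup $F^*(X)$, together with the divisibility condition $\big(p^d - 1\big) \mid |X|$ forced by $|U^\#| = p^d - 1$. As a preliminary step I would record that $X$ acts irreducibly, indeed homogeneously, on $U$, so that $C_X\big(F^*(X)\big) \leq F^*(X)$ and $F^*(X)$ itself acts homogeneously; the argument then splits according to whether the layer $E(X)$, the product of the quasisimple components, is trivial.

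In the \emph{solvable case} $E(X) = 1$, so $F^*(X) = F(X)$ is nilpotent and acts homogeneously on $U$. Clifford theory forces $F(X)$ to be cyclic or of symplectic type, that is, a central product of an extraspecial group with a cyclic group. The cyclic case yields $X \leq \GamL_1(q)$ with $n = 1$, realized concretely by a Singer cycle. In the symplectic-type case the dimension $d$ and the structure of $N_{\GL_d(p)}\big(F(X)\big)$ are tightly constrained, and the divisibility $\big(p^d - 1\big) \mid |X|$ then leaves only finitely many possibilities; these are exactly the exceptional entries recorded in Table \ref{table:translin}.

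In the \emph{non-solvable case} $E(X) \neq 1$, and I would first show that $E(X)$ has a single component $L$, since a genuine tensor decomposition of $U$ over several components is incompatible with transitivity on $U^\#$ outside bounded exceptions. Then $L$ is quasisimple and $L/Z(L)$ is simple, carrying a faithful projective representation of dimension dividing $d$ over $\F_p$. This is the point at which CFSG enters: using the Landazuri--Seitz lower bounds on minimal representation degrees to bound the candidates, one runs through the simple groups and retains only those for which $N_{\GL_d(p)}(L)$, extended by scalars and field automorphisms, can satisfy $\big(p^d - 1\big) \mid |X|$. The generic families surviving this test are $\SL_n(q)$ with $n \geq 2$, $\Sp_n(q)$ with $d$ and $n$ even, and $G_2(q)'$ with $n = 6$ and $p = 2$; the remaining simple groups contribute only finitely many sporadic configurations, again collected in Table \ref{table:translin}. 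To close, I would verify that each listed family is genuinely transitive on $U^\#$: $\SL_n(q)$ and $\Sp_n(q)$ are transitive on the nonzero vectors of $\F_q^n$ by standard Witt-type arguments, $G_2(q)$ is transitive on the nonzero vectors of its $6$-dimensional module in characteristic $2$, and the Singer cycle handles the $\GamL_1(q)$ case.

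The main obstacle is precisely this non-solvable case: there is no elementary route past the need for CFSG together with detailed control of the low-dimensional representations of the quasisimple groups, in both defining and cross characteristic, and the bulk of the work is the order-divisibility bookkeeping that eliminates all but the three named families and the finite list of exceptions.
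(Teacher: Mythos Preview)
The paper does not prove this theorem at all: it is stated as a quotation of Hering's result \cite{Hering}, with no accompanying proof, and is used purely as a black box in the subsequent arguments. Your outline is a reasonable high-level sketch of the strategy behind Hering's classification, and you correctly flag that it ultimately rests on CFSG and detailed representation-theoretic bounds; but there is nothing in the paper to compare it against, since the authors simply cite the result.
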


\begin{table}
\begin{center}
\begin{tabular}{lcl}
\hline
$p$ & $d$ & $X$ \\
\hline\hline
5, 7, 11, 23 & 2 & $X \leq N_{\GL_d(p)}(Q_8)$ \\
11, 19, 29, 59 & 2 & $X \trianglerighteq \SL_2(5)$ \\
3 & 4 & $\SL_2(5) \trianglelefteq X \leq \GamL_2(9)$ \\
3 & 4 & $X \leq N_{\GL_4(3)}(D_8 \circ Q_8)$ \\
2 & 4 & $\Alt(6)$ \\
2 & 4 & $\Alt(7)$ \\
3 & 6 & $\SL_2(13)$ \\
\hline
\end{tabular}
\caption{Sporadic transitive finite linear groups}
\label{table:translin}
\end{center}
\end{table}

By Lemma \ref{lemma:arctrans-proj}, $\theta(G_\mathbf{0})$ is necessarily either $\langle x \rangle$ or $\langle x, y \rangle$. The subgroups of $\GL_d(p) \times \Sym(3)$ are determined by applying Goursat's Lemma (see, for instance, \cite[Exercise 5, p. 75]{Lang}), which describes the subgroups of a direct product of two groups. From this and Theorem \ref{theorem:Hering} we deduce the following:

\begin{theorem} \label{theorem:G_0}
Let $H$ be an elementary abelian group, $\Gamma = \lsg(H)$, $G \leq \mathcal{G} \leq \Aut(\Gamma)$, and $\pi : \mathcal{G}_{\mathbf{0}} \rightarrow \Aut(H)$ be the natural projection. If $\Gamma$ is $G$-arc-transitive, then $G = TG_\mathbf{0}$ where $\pi(G_\mathbf{0})$ is one of the groups in Theorem \ref{theorem:Hering}. Furthermore, for suitable $K \unlhd \pi(G_\mathbf{0})$ and $g, h \in \Aut(H) \setminus K$, the group $G_\mathbf{0}$ is one of the types described in Table \ref{table:G_0}.
\end{theorem}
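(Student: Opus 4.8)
The plan is to combine the two transitivity conditions of Lemma \ref{lemma:arctrans-proj} with the structure theory of subdirect products. The first assertion is immediate: $G$-arc-transitivity gives $G = TG_\mathbf{0}$ with $\pi(G_\mathbf{0}) \leq \GL_d(p)$ transitive on $H^\#$, whence Theorem \ref{theorem:Hering} forces $\pi(G_\mathbf{0})$ to be one of the listed transitive linear groups. It then remains only to describe how $G_\mathbf{0}$ is embedded in $\mathcal{G}_\mathbf{0} = \GL_d(p) \times \langle x, y\rangle$, in which the two factors $\Aut(H)$ and $\langle x, y\rangle$ commute.

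For this I would apply Goursat's Lemma to the subgroup $G_\mathbf{0} \leq \Aut(H) \times \langle x, y\rangle$. Setting $K := \{\,\sigma \in \Aut(H) : \sigma \in G_\mathbf{0}\,\}$, the $\Aut(H)$-part of the Goursat kernel, and $K_B := G_\mathbf{0} \cap \langle x, y\rangle$, Goursat's Lemma yields $K \unlhd \pi(G_\mathbf{0})$, $K_B \unlhd \theta(G_\mathbf{0})$, and an isomorphism $\phi : \pi(G_\mathbf{0})/K \to \theta(G_\mathbf{0})/K_B$ such that
\[ G_\mathbf{0} = \big\{\, \sigma z \ \big|\ \sigma \in \pi(G_\mathbf{0}),\ z \in \theta(G_\mathbf{0}),\ \phi(\sigma K) = z K_B \,\big\}. \]

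By Lemma \ref{lemma:arctrans-proj} the image $\theta(G_\mathbf{0})$ is transitive on $\{1,2,3\}$, hence equals $\langle x\rangle \cong C_3$ or $\langle x, y\rangle \cong \Sym(3)$, and I would run through the normal subgroups $K_B$ of each. If $\theta(G_\mathbf{0}) = \langle x\rangle$ then $K_B \in \{1,\langle x\rangle\}$, giving $\pi(G_\mathbf{0})/K \cong C_3$ or $\pi(G_\mathbf{0}) = K$ (so $G_\mathbf{0} = K \times \langle x\rangle$). If $\theta(G_\mathbf{0}) = \langle x, y\rangle$ then the normal subgroups are $1$, $\langle x\rangle \cong \Alt(3)$, and $\langle x, y\rangle$, giving $\pi(G_\mathbf{0})/K \cong \Sym(3)$, $C_2$, or $1$. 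In each case I recover the elements $g, h$ of the statement as $\Aut(H)$-components of Goursat preimages of the generators: when $x \notin K_B$ I choose $g$ with $gx \in G_\mathbf{0}$, so that $\phi(gK) = xK_B$ and $g \notin K$; when $\theta(G_\mathbf{0}) = \langle x, y\rangle$ and $y \notin K_B$ I likewise choose $h$ with $hy \in G_\mathbf{0}$, giving $h \notin K$. Then $G_\mathbf{0} = \langle K,\, gx,\, hy\rangle$ with any absent generator dropped, and matching these configurations — two when $\theta(G_\mathbf{0}) = \langle x\rangle$ and three when $\theta(G_\mathbf{0}) = \langle x, y\rangle$ — to the rows of Table \ref{table:G_0} completes the argument.

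The Goursat bookkeeping itself is routine; the point needing care is the precise identification of $g$ and $h$ and the verification that the compatibility $\phi(\sigma K) = z K_B$ is exactly the relation recorded in each row of Table \ref{table:G_0}. I would stress that this theorem imposes no constraint linking a given Hering group $\pi(G_\mathbf{0})$ to the quotient $\pi(G_\mathbf{0})/K$: the existence of a suitable normal subgroup $K$ is automatic from Goursat's Lemma applied to the actual $G_\mathbf{0}$, and the separate question of which of these combinations actually yields an arc-transitive and quotient-complete graph is deferred to Section \ref{section:quotcomplsg}.
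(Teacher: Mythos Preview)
Your approach via Goursat's Lemma is exactly what the paper does, and your case analysis of $\theta(G_\mathbf{0})$ and its normal subgroups correctly recovers the first three columns of Table~\ref{table:G_0}. The paper's proof says precisely this: ``The first three columns of Table~\ref{table:G_0} follow immediately from Goursat's Lemma.''

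However, you have omitted the part of the proof that carries actual content. Table~\ref{table:G_0} has a fourth column, ``Transitive on $H^\#$'', and this is not a restatement of Lemma~\ref{lemma:arctrans-proj}. Lemma~\ref{lemma:arctrans-proj} only tells you that $\pi(G_\mathbf{0})$ is transitive on $H^\#$, but the table asserts something sharper: in lines 1--3 it is $K$ itself that must be transitive, and in line 5 it is the proper subgroup $\langle K, h\rangle \lneq \pi(G_\mathbf{0}) = \langle K, g, h\rangle$. These refinements do not drop out of Goursat's Lemma; the paper obtains them by computing the $G_\mathbf{0}$-orbit of $(a,0_U)$ explicitly in each case and observing which subgroup of $\pi(G_\mathbf{0})$ controls the $V_0^\#$-part of that orbit. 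For instance, in line~2 one finds $(a,0_U)^{G_\mathbf{0}} = (a,0_U)^K \cup (-a^g,a^g)^K \cup (0_U,-a^{g^2})^K$, and equality with $\Gamma(\mathbf{0})$ forces $(a,0_U)^K = V_0^\#$, i.e.\ $K$ transitive. Your closing remark that ``this theorem imposes no constraint linking a given Hering group $\pi(G_\mathbf{0})$ to the quotient $\pi(G_\mathbf{0})/K$'' is therefore not quite right: the fourth column is exactly such a constraint, and it is what makes Theorem~\ref{theorem:index2} (on index-$2$ intransitive subgroups) relevant to lines~4 and~5.
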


	\begin{center}
	\begin{table}
	\begin{tabular}{rllll}
	\hline
	& $G_\mathbf{0}$ & $\pi(G_\mathbf{0})$ & Conditions on cosets of $K$ & Transitive on $H^\#$ \\
	\hline\hline
	\small\textsf{1} & $\langle K, x \rangle$ & $K$ &  & $K$ \\
	\small\textsf{2} & $\langle K, gx \rangle$ & $\langle K, g \rangle$ & $Kg^3 = K$ & $K$ \\
	\small\textsf{3} & $\langle K, x, y \rangle$ & $K$ &  & $K$ \\
	\small\textsf{4} & $\langle K, x, hy \rangle$ & $\langle K, h \rangle$ & $Kh^2 = K$ & $\langle K, h \rangle$ \\
	\small\textsf{5} & $\langle K, gx, hy \rangle$ & $\langle K, g, h \rangle$ & $Kg^3 = Kh^2 = K$, $Kgh = Khg^2$ & $\langle K, h \rangle$ \\
	\hline
	\end{tabular}
	\caption{Possible groups $G_\mathbf{0}$} \label{table:G_0}
	\end{table}
	\end{center}

\begin{proof}
The first three columns of Table \ref{table:G_0} follow immediately from Goursat's Lemma \cite[Exercise 5, p. 75]{Lang}. Clearly Lemma \ref{lemma:arctrans-proj} implies that for $G_\mathbf{0}$ in lines 1 or 3 the group $K$ must be transitive on $H^\#$. Let $a \in H^\#$. For $G_\mathbf{0}$ in line 2 we have $(a,0_U)^{G_\mathbf{0}} = (a,0_U)^K \cup (-a^g,a^g)^K \cup \big(0_U,-a^{g^2}\big)^K$, so $(a,0_U)^{G_\mathbf{0}} = \Gamma(\mathbf{0})$ if and only if $(a,0_U)^K = V^\#_0$ and equivalently $K$ is transitive on $U^\#$. For $G_\mathbf{0}$ in  line 4 we have $(a,0_U)^{G_\mathbf{0}} = (a,0_U)^{\langle K, h \rangle} \cup (-a,a)^{\langle K, h \rangle} \cup (0_U,-a)^{\langle K, h \rangle}$, and for $G_\mathbf{0}$ in line 5 we have $(a,0_U)^{G_\mathbf{0}} = (a,0_U)^{\langle K, h \rangle} \cup \big(-a^{hg},a^{hg}\big)^{\langle K, h \rangle} \cup \big(0_U,-a^{hg^2}\big)^{\langle K, h \rangle}$. In both cases $(a,0_U)^{G_\mathbf{0}} = \Gamma(\mathbf{0})$ if and only if $\langle K, h \rangle$ is transitive on $U^\#$.
\end{proof}

%

The groups in lines 4 and 5 of Table \ref{table:G_0} correspond to subgroups of index 2 of transitive finite linear groups. In view of this, we list in Theorem \ref{theorem:index2} the transitive finite linear groups which contain an intransitive subgroup of index $2$.

\begin{theorem} \label{theorem:index2}
Let $U$ be a vector space of dimension $d$ over the prime field of order $p$ and let $X \leq \GL_d(p)$ be transitive on $U^\#$. Suppose that $X$ has a normal subgroup of index $2$ which is intransitive on $U^\#$. Then either $X \unlhd \GamL_1(q)$ where $q = p^d$, or $X$ is a group belonging to Table \ref{table:translin} and in particular, using the notation of \cite[Table 2]{AGP-quotcomp}, $X$ is one of the following groups: $H(2)$, $H(5)$, $H(6)$, $H(7)$, $H(10)$, $A(2)$, $A(3)$, $A(4)$, $N_{\GamL_2(9)}(\SL_2(5))$, and $B(1)$.
\end{theorem}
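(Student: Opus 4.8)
The plan is to run through Hering's classification (Theorem \ref{theorem:Hering}) and, in each case, detect when an index-$2$ subgroup is forced to stay transitive, using the following elementary observation. \emph{Observation:} if $M \unlhd X$ is transitive on $U^\#$ and $M$ has no subgroup of index $2$ (equivalently $|M^{\textnormal{ab}}|$ is odd; in particular this holds whenever $M$ is perfect), then every subgroup $Y \leq X$ of index $2$ is transitive. Indeed $Y \unlhd X$, and if $M \not\leq Y$ then $X = MY$, so $M \cap Y \unlhd M$ has index $|MY:Y| = 2$ in $M$, contradicting the hypothesis; hence $M \leq Y$ and $Y \supseteq M$ is transitive. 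Consequently, whenever $X$ contains such an $M$ it has \emph{no} intransitive normal subgroup of index $2$ and is excluded from the conclusion.

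First I would dispose of the three large classes of Theorem \ref{theorem:Hering}. If $X \unrhd \SL_n(q)$ with $n \geq 2$, then $\SL_n(q)$ is transitive on $U^\#$ and perfect unless $(n,q) \in \{(2,2),(2,3)\}$, so the Observation excludes these $X$; for $(n,q) = (2,2)$ we have $\GamL_2(2) = \GL_2(2) \cong \GamL_1(4)$, hence $X \leq \GamL_1(p^d)$ and the first alternative of the conclusion holds, while for $(n,q) = (2,3)$ the order $|\SL_2(3)^{\textnormal{ab}}| = 3$ is odd, so again the Observation applies. If $X \unrhd \Sp_n(q)$ ($d,n$ even) then $\Sp_n(q)$ is transitive and perfect except for $\Sp_2(q) = \SL_2(q)$ (already treated) and $\Sp_4(2) \cong \Sym(6)$, whose unique index-$2$ subgroup $\Alt(6)$ is transitive (it is the Table \ref{table:translin} group with $p = 2$, $d = 4$). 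If $X \unrhd G_2(q)$ ($n = 6$, $p = 2$) then $G_2(q)$ is transitive and simple for $q \geq 4$, while $G_2(2)' \cong \textnormal{PSU}_3(3)$ is simple, characteristic in $G_2(2) \unlhd X$, and transitive (a point stabilizer of order $192$ in $G_2(2)$ meets $G_2(2)'$ in index $2$, forcing a single orbit of length $63$). In each case the Observation excludes $X$, so the only surviving infinite class is $\GamL_1(q)$ with $q = p^d$, which is the first alternative of the conclusion.

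It remains to treat the sporadic groups of Table \ref{table:translin}. Four of the seven lines carry a perfect transitive normal subgroup and are excluded by the Observation: the lines with $\SL_2(5) \unlhd X$, $d = 2$ and $p \in \{11,19,29,59\}$ (where $\SL_2(5)$ is transitive, being a listed transitive linear group and $|\SL_2(5)| = 120$ dividing $p^2 - 1$); the groups $\Alt(6)$ and $\Alt(7)$ with $p = 2$, $d = 4$ (both simple and transitive on the $15$ nonzero vectors, with $\Alt(7)$ self-normalizing in $\GL_4(2) \cong \Alt(8)$, so there $X = \Alt(7)$ has no index-$2$ subgroup, while any admissible overgroup of $\Alt(6)$ contains it as a transitive normal subgroup); and $\SL_2(13)$ with $p = 3$, $d = 6$ (perfect and transitive, $|\SL_2(13)| = 2184 = 3(3^6 - 1)$). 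Hence no example arises from these lines.

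The genuine sporadic examples therefore come only from the remaining three lines: $X \leq N_{\GL_2(p)}(Q_8)$ with $d = 2$, $p \in \{5,7,11,23\}$; the line with $\SL_2(5) \unlhd X \leq \GamL_2(9)$, where $|\SL_2(5)| = 120$ does \emph{not} divide $|U^\#| = 80$, so $\SL_2(5)$ is intransitive and transitivity pushes $X$ up to $N_{\GamL_2(9)}(\SL_2(5))$; and $X \leq N_{\GL_4(3)}(D_8 \circ Q_8)$ with $d = 4$. For each of these solvable (or near-solvable) normalizer configurations one must list the transitive overgroups $X$, compute their index-$2$ subgroups, and test each for transitivity on $U^\#$; this is exactly the data tabulated in \cite[Table 2]{AGP-quotcomp}, and reading off those $X$ with an intransitive index-$2$ subgroup yields precisely $H(2)$, $H(5)$, $H(6)$, $H(7)$, $H(10)$, $A(2)$, $A(3)$, $A(4)$, $N_{\GamL_2(9)}(\SL_2(5))$, and $B(1)$. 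I expect this last step — the explicit orbit-and-index-$2$ analysis of the extraspecial-normalizer groups in lines $1$ and $4$ and of the $\SL_2(5) < \GamL_2(9)$ configuration — to be the main obstacle, since here there is no perfect normal subgroup to invoke and the verification reduces to a finite computation with the structure of $Q_8.S_3$ and of $(D_8 \circ Q_8).\mathrm{O}_4^{-}(2)$ (conveniently carried out by computer).
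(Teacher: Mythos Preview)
Your overall strategy matches the paper's: run through Hering's list, eliminate the infinite classes by showing any index-$2$ subgroup must contain a transitive normal subgroup, and then handle the sporadics separately. Your ``Observation'' is a clean substitute for the paper's appeal to Iwasawa's criterion, and your treatment of the $\SL_n(q)$, $\Sp_n(q)$ and $G_2(q)$ classes (including the small exceptions $\SL_2(2)$, $\SL_2(3)$, $\Sp_4(2)$, $G_2(2)$) is correct.

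There is, however, a genuine gap in your sporadic analysis. For the line $\SL_2(5) \unlhd X$ with $d = 2$ and $p \in \{11,19,29,59\}$ you assert that $\SL_2(5)$ is itself transitive on $U^\#$, citing that $|\SL_2(5)| = 120$ divides $p^2 - 1$. The divisibility runs the wrong way: transitivity requires $p^2 - 1$ to divide $|\SL_2(5)|$, not conversely. This holds only for $p = 11$ (where $p^2 - 1 = 120$ and the action is regular); for $p = 19,29,59$ one has $p^2 - 1 = 360,\,840,\,3480$, all strictly larger than $120$, so $\SL_2(5)$ is intransitive with $3$, $7$, $29$ orbits respectively. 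The table entry reads $X \unrhd \SL_2(5)$, not $X = \SL_2(5)$: it is $X$ that is transitive, not $\SL_2(5)$. Consequently your Observation does not apply, and you have not shown that an index-$2$ subgroup $Y \unlhd X$ is transitive in these cases. What your argument does give is $\SL_2(5) \leq Y$ (since $\SL_2(5)$ is perfect), but one must still argue that $Y$ permutes the $\SL_2(5)$-orbits transitively; this requires analysing $N_{\GL_2(p)}(\SL_2(5))$ and the induced action on the orbit set, which is not automatic. The paper sidesteps this by deferring \emph{all} sporadic lines, including this one, to a \textsc{Magma} computation.
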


\begin{proof}
Let $q = p^{d/n}$ for some divisor $n$ of $d$ with $n \geq 2$, and suppose that $X$ has a normal subgroup $K$ of index $2$ which is intransitive on $U^\#$. If $G_2(q) \unlhd X$ ($p = 2$ and $n = 6$) then $K \unrhd G_2(q)$, so $K$ is transitive, contradiction. Assume that $L \unlhd X$ and $L \nleq K$, where $L = \SL_n(q)$ ($n > 2$ or $q > 3$) or $L = \Sp_n(q)$ ($n > 4$ or $q > 2$). It can be deduced from Iwasawa's criterion \cite[Theorem 1.2]{Taylor} that $K \cap L \unlhd Z(L)$ or $L \unlhd K$. Since $K$ is intransitive it must be that $K \cap L \unlhd Z(L)$. Also $KL = X$, so that $|X : K| = |KL : K| = |L : K \cap L| \geq |L : Z(L)| > 2$, contradiction. If $X$ is one of $\SL_2(2)$, $\SL_2(3)$, and $\Sp_4(2)$, it is easily verified that $X$ has no subgroup $K$ satisfying the above conditions. Thus $X$ is one of the groups in Table \ref{table:translin}, and in this case the specific groups $X$ are found using \textsc{Magma} \cite{Magma}.
\end{proof}

\section{Main results} \label{section:quotcomplsg}

We collect in Hypothesis \ref{hypothesis} the conditions that we assume for the remainder of this paper.

\begin{hypothesis} \label{hypothesis}
For a prime $p$ and positive integer $d$, the group $H = C_p^d$ is viewed as a vector space $U$ of dimension $n$ over the finite field $\F_q$, where $n$ is a divisor of $d$ and $q = p^{d/n}$. The group $H^2$ is viewed as $V = U \oplus U$ with elements written as ordered pairs. The group $T$ is as in (\ref{equation:T}) and $G = T \rtimes G_\mathbf{0}$ where $G_\mathbf{0}$ is one of the types described in Table \ref{table:G_0}.
\end{hypothesis}

In addition to the notation defined in (\ref{equation:V_inf}) -- (\ref{equation:alpha}), for any subspace $W$ of $V$ we denote by $T_W$ the group of translations of $V$ by elements of $W$, that is,
	\begin{equation} \label{equation:T_W}
	T_W := \left\{ (\rho_a, \rho_b, \rho_{a+b}) \ | \ (a,b) \in W \right\}.
	\end{equation}
Also for any $c \in \F^\#_q$ we denote by $|c|$ the multiplicative order of $c$ in $\F^\#_q$. Recall that $\GamL_n(q) \lesssim \GL_d(p)$ and $\GamL_n(q) = \GL_n(q) \rtimes \Aut(\F_q)$; let
	\begin{equation} \label{equation:alpha}
	\alpha : \GamL_n(q) \rightarrow \Aut(\F_q)
	\end{equation}
be the natural projection map.

Since $H$ is abelian, it can be shown that for any $z \in \langle x, y \rangle$ and $a, b \in H$,
	\[ z^{-1} (\rho_a, \rho_b, \rho_{a+b}) z = (\rho_{a'}, \rho_{b'}, \rho_{a'+b'}) \]
where $(a', b', a'+b') = (a, b, a+b)^z$. Also the correspondence $(a,b) \mapsto (a,b)^z := (a',b')$ defines an action of $\langle x, y \rangle$ on $V$ induced by the action on $\V(\Gamma)$, which together with the componentwise action of $\GL(U)$ yields an action of $G_\mathbf{0}$ on $V$. It follows that a normal subgroup of $G$ corresponds to a subspace of $V$ generated by a union of orbits of $G_\mathbf{0}$ under this action. We are interested in finding minimal normal subgroups of $G$ which are intransitive on $V$; these correspond to $G_\mathbf{0}$-orbits in $V^\#$ which generate proper subspaces of $V$.

In Lemma \ref{lemma:orbits} we give some small results about subspaces generated by $G_\mathbf{0}$-orbits. These will be used in the proof of Lemma \ref{lemma:subspaces-V_dep}.

\begin{lemma} \label{lemma:orbits}
Let $U$ be a vector space of dimension $n$ over $\F_q$, $V = U \oplus U$, and $K \leq \GamL_n(q)$.
	\begin{enumerate}[1.]
	\item Suppose that $K$ is transitive on $U^\#$ and let $(a_1,b_1), (a_2,b_2) \in V^\#$. If $(a_1,b_1)^K \neq (a_2,b_2)^K$, then $\big\langle (a_1,b_1)^K, \ (a_2,b_2)^K \big\rangle = V$.
	\item Suppose that $K \leq X \leq \GamL_n(q)$, with $X$ transitive on $U^\#$ and $|X : K| \leq 2$. If $a \in U^\#$ and $c \in \F^\#_q \setminus \{-1\}$, then $\big\langle (a,ca)^K \big\rangle = \bigcup_{c' \in c^{\alpha(K)}} V^\#_{c'}$.
	\end{enumerate}
\end{lemma}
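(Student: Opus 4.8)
The plan is to treat the two parts separately, both times using that a subspace spanned by a union of $K$-orbits is $K$-invariant (since $\GamL_n(q)$ acts semilinearly on each coordinate of $V=U\oplus U$, sending $\F_q$-subspaces to $\F_q$-subspaces), and that irreducibility of $K$ forces the only $K$-invariant subspaces of $U$ to be $0$ and $U$.

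For part 1, write $W=\langle (a_1,b_1)^K,(a_2,b_2)^K\rangle$. I would first show that both coordinate projections $\pi_1,\pi_2\colon V\to U$ send $W$ onto $U$: since $K$ is transitive on $U^\#$, both $V^\#_\infty$ and $V^\#_0$ are single $K$-orbits, so the hypothesis $(a_1,b_1)^K\neq(a_2,b_2)^K$ prevents both orbits from lying in $V_\infty$ (respectively $V_0$); hence some $a_i\neq 0$, giving $\pi_1(W)=\langle a_i^K\rangle=U$, and likewise $\pi_2(W)=U$. Then Goursat's lemma applied to $W\le U\oplus U$ with surjective projections yields an isomorphism $U/W_1\cong U/W_2$, where $W_1=\{u:(u,0)\in W\}$ and $W_2=\{v:(0,v)\in W\}$ are $K$-invariant, hence each is $0$ or $U$. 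If $W_1=W_2=U$ then $W=V$. The remaining case $W_1=W_2=0$ makes $W$ the graph of an $\F_q$-linear bijection $\psi$ with $\psi(a_i)=b_i$; here $a_1,a_2\neq 0$ (a point $(0,b)$ on a graph forces $b=0\notin V^\#$), so by transitivity some $g\in K$ sends $a_1\mapsto a_2$, whence $(a_2,b_2)=(a_1,b_1)^g$ and the two orbits coincide, contradicting the hypothesis. Thus $W=V$.

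For part 2 the engine is the irreducibility of $K_0:=K\cap\GL_n(q)$ on $U$, which I would prove by a covering argument: if $K_0$ fixed a proper nonzero subspace $U_1$ (so $n\ge 2$), then $K_0\le\Stab_X(U_1)$, so the $X$-orbit of $U_1$ has at most $|X:K_0|=|X:K|\,|\alpha(K)|\le 2\log_p q$ members; since $X$ is transitive on $U^\#$ these translates cover $U$, which is impossible because a vector space over $\F_q$ is never a union of fewer than $q+1$ proper subspaces, while $2\log_p q\le q<q+1$. The non-obvious point is to translate $U_1$ by the full transitive group $X$ rather than by $K$. In particular $K$ is irreducible too. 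Computing the action, $(u,cu)^g=(u^g,c^{\alpha(g)}u^g)$, so $V_c^g=V_{c^{\alpha(g)}}$, the orbit lies in $\bigcup_{c'\in c^{\alpha(K)}}V_{c'}$, and $W:=\langle(a,ca)^K\rangle\le\sum_{c'\in c^{\alpha(K)}}V_{c'}$.

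Finally I would show $W$ equals this sum. If $\alpha(K)$ fixes $c$, then $K$ stabilizes $V_c$, so $W\le V_c$, and identifying $V_c\cong U$ (on which $K$ acts naturally) gives $W\cong\langle a^K\rangle=U$ by irreducibility, whence $W=V_c$. If $|c^{\alpha(K)}|\ge 2$, then $ca\neq 0$ gives $\pi_1(W)=\pi_2(W)=U$, and Goursat as in part 1 forces $W=V$ or $W=$ graph of a $K$-equivariant $\psi\in\GL(U)$; in the graph case each $a^g$ is a $\psi$-eigenvector of eigenvalue $c^{\alpha(g)}$, so $\psi$ has the $\ge 2$ distinct eigenvalues in $c^{\alpha(K)}$ and $K_0$ preserves each (proper) eigenspace, contradicting irreducibility of $K_0$; hence $W=V$. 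In both cases $W=\sum_{c'\in c^{\alpha(K)}}V_{c'}$, which coincides with the asserted set $\bigcup_{c'\in c^{\alpha(K)}}V_{c'}$ exactly when $\alpha(K)$ fixes $c$ (and is all of $V$ otherwise). I expect the main obstacle to be establishing the irreducibility of $K_0$ cleanly, via the translate-counting idea above, together with excluding the ``thin'' graph solution in the Goursat step, which is precisely what $K_0$-irreducibility rules out.
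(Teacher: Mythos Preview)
Your proof is correct and takes a genuinely different route from the paper's.

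For part~1, the paper argues directly: using transitivity of $K$ on $U^\#$, it finds an element of $(a_2,b_2)^K$ with the same first coordinate as $(a_1,b_1)$, subtracts to produce a nonzero element of $V_\infty$ inside $W$, and by symmetry obtains $V_0^\# \subseteq W$, whence $W = V$. Your Goursat argument is a clean repackaging of the same idea --- the ``graph'' case of Goursat is exactly the obstruction the paper's subtraction trick eliminates --- and it handles the edge cases ($a_i = 0$ or $b_i = 0$) more transparently than the paper's somewhat elliptical treatment.

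For part~2 the difference is substantive. The paper splits into two cases: when $K$ itself is transitive on $U^\#$ the claim is declared ``clear''; when $K$ has index~$2$ in a transitive $X$ but is itself intransitive, the paper invokes the classification of such pairs (Theorem~\ref{theorem:index2}) and verifies the statement case by case in \textsc{Magma}. Your argument replaces both cases by a single structural fact: $K_0 = K \cap \GL_n(q)$ is irreducible on $U$, proved by the translate-counting bound $|X:K_0| \le 2\log_p q \le q < q+1$ against the covering number of $U$ by proper subspaces. Combined with the Goursat/eigenspace step, this dispatches part~2 uniformly and without computer assistance. What the paper's approach buys is brevity in the transitive case; what yours buys is a self-contained, classification-free proof. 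You also correctly observe that the lemma's conclusion as literally written (span equal to the union $\bigcup V_{c'}^\#$) only holds when $\alpha(K)$ fixes $c$; in the remaining case your argument yields $W = V$, which is the statement actually used downstream in Lemma~\ref{lemma:subspaces-V_dep}.
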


\begin{proof}
Suppose that $K$ is transitive and $(a_1,b_1) \notin (a_2,b_2)^K$. Then there exists $b'_1 \neq b_1$ such that $(a_1,b'_1) \in (a_2,b_2)^K$, so that $(0, \ b_1-b'_1)^K = V^\#_\infty \subseteq W$. Similarly $V^\#_0 \subseteq W$. Therefore $W = \big\langle V^\#_0, V^\#_\infty \big\rangle = V$. This proves statement 2.

Let $a \in U^\#$ and $c \in \F^\#_q \setminus \{-1\}$. If $K$ is transitive on $U^\#$ then clearly $\big\langle (a,ca)^K \big\rangle = \bigcup_{c' \in c^{\alpha(K)}} V^\#_{c'}$. If $K$ is an intransitive subgroup of $X$ with $|X : K| = 2$ then $X$ is one of the groups listed in Lemma \ref{theorem:index2}; a case-by-case examination using \textsc{Magma} \cite{Magma} confirms that also $\big\langle (a,ca)^K \big\rangle = \bigcup_{c' \in c^{\alpha(K)}} V^\#_{c'}$. This proves statement 3.
\end{proof}

We now describe the subspaces generated by each $G_\mathbf{0}$-orbit in $V^\#$.

\begin{lemma} \label{lemma:subspaces-V_ind}
Assume Hypothesis \ref{hypothesis} with $n \geq 2$. Then $\langle S \rangle = V$ for any $G_\mathbf{0}$-orbit $S$ in $V_{\textnormal{$q$-ind}}$.
\end{lemma}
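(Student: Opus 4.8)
The plan is to show that whenever an orbit $S$ meets $V_{\textnormal{$q$-ind}}$, the subspace $\langle S\rangle$ it generates is all of $V$. The key structural fact I would exploit is that $G_\mathbf 0$ contains the subgroup $K\unlhd\pi(G_\mathbf 0)$ acting componentwise on $V=U\oplus U$, together with the coordinate-permutation part $\theta(G_\mathbf 0)$ which (by Lemma \ref{lemma:arctrans-proj}) is transitive on $\{1,2,3\}$, and that by Theorem \ref{theorem:G_0} the relevant linear group $\langle K,h\rangle$ (or $K$ itself) is transitive on $U^\#$. So take $(a,b)\in S\cap V_{\textnormal{$q$-ind}}$, meaning $\{a,b\}$ is linearly independent in $U$; in particular both $a\neq 0_U$ and $b\neq 0_U$, so $(a,b)$ lies in neither $V_\infty$, $V_0$, nor any $V_c$.

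First I would use the componentwise transitivity of $K$ on $U^\#$ in each coordinate to produce, from the single independent pair $(a,b)$, enough vectors in the orbit to force a large subspace. Concretely, since $K$ is transitive on $U^\#$, I can fix the first coordinate value and vary the second over all of $U^\#$ reachable by elements of $K$ fixing (or moving controllably) the first coordinate; the independence of $\{a,b\}$ is exactly what guarantees these translates are genuinely different vectors rather than scalar multiples, so their differences land in $V_\infty^\#$. This is the mechanism already seen in the proof of Lemma \ref{lemma:orbits} part 1: once two orbit elements share a first coordinate but differ in the second, one gets $(0_U,\,b_1-b_1')^K=V_\infty^\#\subseteq\langle S\rangle$. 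The independence hypothesis is what lets me manufacture such a coincidence in the first coordinate while keeping the second coordinates distinct.

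Once $V_\infty^\#\subseteq\langle S\rangle$, I would invoke the coordinate-swapping action of $\theta(G_\mathbf 0)$. Because $\theta(G_\mathbf 0)$ is transitive on $\{1,2,3\}$, it contains an element interchanging the roles of the first and second coordinates (up to the sign/inverse twists recorded in Table \ref{table:autoparatopisms}, which are harmless here since $H$ is elementary abelian), and this carries $V_\infty$ to $V_0$. Hence $V_0^\#\subseteq\langle S\rangle$ as well, and then $\langle V_0^\#,V_\infty^\#\rangle=V$ finishes the argument, exactly as in the last line of the proof of Lemma \ref{lemma:orbits}. An alternative, if the first step only yields $V_\infty^\#$ after passing through $\langle K,h\rangle$ rather than $K$, is to appeal directly to Lemma \ref{lemma:orbits} part 1 with two distinct orbit elements sharing a coordinate.

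The main obstacle I anticipate is the bookkeeping around which coincidences the group $K$ can actually realize: transitivity of $K$ on $U^\#$ in each coordinate separately does not immediately give transitivity on independent pairs, so I must argue carefully that from one independent pair I can reach a second orbit element agreeing with it in exactly one coordinate. The cleanest route is probably to reduce to Lemma \ref{lemma:orbits} part 1 by exhibiting two distinct $K$-orbit representatives with a common coordinate, using independence to rule out the degenerate case where every element of $(a,b)^K$ with first coordinate $a$ also has second coordinate $b$. I expect this degenerate case to be excluded precisely because $\{a,b\}$ independent forces the pointwise stabilizer of $a$ in $K$ to move $b$ off its scalar line, which in turn relies on $K$ (or $\langle K,h\rangle$) being one of the transitive linear groups of Theorem \ref{theorem:Hering}; a uniform argument covering the $\GamL_1(q)$ case $n=1$ is irrelevant here since we assume $n\geq 2$, but the sporadic and classical cases may each need a brief check.
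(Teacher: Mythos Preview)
Your approach diverges from the paper's. The paper makes no structural argument here: it shows that already $\langle (a,b)^K\rangle = V$ for the $K$-orbit alone, by citing \cite[Propositions~4.2(1), 4.4(2), 4.8(1--3)]{AGP-quotcomp} when $K$ lies in one of the classical families of Theorem~\ref{theorem:Hering} (cases 2--4), \cite[Lemma~4.9]{AGP-quotcomp} for the sporadic cases with $d=2$, and a \textsc{Magma} computation for the remaining sporadic groups. The $\Sym(3)$ component of $G_\mathbf 0$ plays no role whatsoever.

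Your plan --- produce two elements of $S$ sharing a first coordinate, deduce $V_\infty\subseteq\langle S\rangle$, then apply an element of $G_\mathbf 0$ projecting to $x$ (which does carry $V_\infty$ to $V_0$) --- is sound in its second half, but the first half is precisely where all the content sits, and you have flagged the obstacle without resolving it. The assertion that $K_a$ moves $b$ whenever $\{a,b\}$ is $\F_q$-independent is \emph{not} a consequence of transitivity of $K$ (or $\langle K,h\rangle$) on $U^\#$: one needs the point stabiliser to act nontrivially outside $\langle a\rangle_{\F_q}$. For the classical families this is true, but establishing it is essentially the content of the propositions the paper cites; for the sporadic families --- including the possibly intransitive index-two subgroups $K$ arising in lines~4 and~5 of Table~\ref{table:G_0} (cf.\ Theorem~\ref{theorem:index2}) --- a direct check is still required. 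So your route reorganises rather than eliminates the case analysis. Note also that once $K_a$ is known to move $b$, by symmetry $K_b$ moves $a$, so $V_\infty\cup V_0\subseteq\langle(a,b)^K\rangle$ already without any coordinate permutation; the $\Sym(3)$ leverage you hoped for therefore does not actually shorten the argument.
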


\begin{proof}
Let $(a,b) \in V_{\text{$q$-ind}}$ and $K \unlhd \pi(G_\mathbf{0})$ as in Theorem \ref{theorem:G_0}. If $K$ is one of the groups in Theorem \ref{theorem:Hering} (2.--4.) then $\big\langle (a,b)^K \big\rangle = V$ by \cite[Propositions 4.2 (1), 4.4 (2), and 4.8 (1--3)]{AGP-quotcomp}, and it follows that $\big\langle (a,b)^{G_\mathbf{0}} \big\rangle = V$ for any $G_\mathbf{0}$ in Theorem \ref{theorem:G_0}. If $K$ is one of the groups in Table \ref{table:translin} then with $d = 2$ then $\big\langle (a,b)^K \big\rangle = V$ by \cite[Lemma 4.9]{AGP-quotcomp}. For the remaining $K$, the result is verified using \textsc{Magma} \cite{Magma}.
\end{proof}

\begin{lemma} \label{lemma:subspaces-V_dep}
Assume Hypothesis \ref{hypothesis}. Let $c \in \F^\#_q \setminus \{-1\}$ and $S$ be a $G_\mathbf{0}$-orbit in $V_{\textnormal{$q$-dep}}$. Also let $\alpha(g) = \tau^i_p$, $\alpha(h) = \tau^j_p$, and $\alpha(K) = \langle \tau^\ell_p \rangle$, with $i, j, \ell \in \{1, \ldots, d\}$ and $m = \gcd(i,\ell)$. Then $\langle S \rangle = V_c$ if $S = (a,ca)^{G_\mathbf{0}}$ for some $a \in U^\#$, $c \in \F^\#_q \setminus \{-1\}$, and the conditions in Table \ref{table:V_c} are satisfied; otherwise, $\langle S \rangle = V$.
\end{lemma}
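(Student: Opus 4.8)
The plan is to reduce the computation of $\langle S\rangle$ to the orbit of the ``slope'' $c$ under the induced action of $G_\mathbf{0}$ on the subspaces $V_{c}$, $c\in\F_q\cup\{\infty\}$. First I would record how each kind of generator moves a $V_c$. In the coordinates on $V$ coming from the identification of $\V(\Gamma)$ with $U\oplus U$, a direct calculation from Table \ref{table:autoparatopisms} shows that $x$ acts by $(u,v)\mapsto(-u-v,u)$ and $y$ by $(u,v)\mapsto(u+v,-v)$; consequently $x$ sends $V_c$ to $V_{-(1+c)^{-1}}$ and $y$ sends $V_c$ to $V_{-c(1+c)^{-1}}$. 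Any $\kappa\in K$ acts componentwise and semilinearly with field automorphism $\alpha(\kappa)$, so it sends $V_c$ to $V_{c^{\alpha(\kappa)}}$. Composing, $gx$ sends $V_c$ to $V_{-(1+c^{p^i})^{-1}}$ and $hy$ sends $V_c$ to $V_{-c^{p^j}(1+c^{p^j})^{-1}}$, where $\alpha(g)=\tau_p^i$ and $\alpha(h)=\tau_p^j$.

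Next I would show that $\langle S\rangle$ is controlled by the slope-orbit of $c$. By Lemma \ref{lemma:orbits} (2.), $\langle(a,ca)^{K}\rangle=\bigcup_{c'\in c^{\alpha(K)}}V_{c'}$; since distinct subspaces $V_{c_1},V_{c_2}$ meet only in $0$ and hence satisfy $V_{c_1}+V_{c_2}=V$, this span is $V_c$ when $c\in\Fix(\langle\tau_p^\ell\rangle)$ (so $c^{\alpha(K)}=\{c\}$) and is $V$ otherwise. If $c\notin\Fix(\langle\tau_p^\ell\rangle)$ we conclude $\langle S\rangle=V$ at once. If $c\in\Fix(\langle\tau_p^\ell\rangle)$, decomposing $G_\mathbf{0}$ into cosets $Kt$ and using $\langle(a,ca)^{Kt}\rangle=\big(\langle(a,ca)^{K}\rangle\big)^{t}=V_{c}^{\,t}=V_{c^{t}}$, I obtain that $\langle S\rangle$ is the sum of the subspaces $V_{c'}$ as $c'$ runs through the orbit of $c$ under the induced slope action; this is $V_c$ exactly when that orbit is $\{c\}$ and is $V$ otherwise. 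Therefore $\langle S\rangle=V_c$ if and only if $c\in\Fix(\langle\tau_p^\ell\rangle)$ and $c$ is fixed by every coordinate-permutation generator of $G_\mathbf{0}$.

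The third step translates ``$c$ is fixed'' for each generator into the defining conditions of $C_1$ and $C_2$. From the slope formulas: $x$ fixes $c$ iff $c^2+c+1=0$; $y$ fixes $c$ iff $c=-2$ (recall $c\neq0$); $gx$ fixes $c$ iff $c^{p^i}=-c^{-1}(c+1)$, i.e. $c\in C_1(p,d,i)$; and $hy$ fixes $c$ iff $c^{p^j}=-c(c+1)^{-1}$, i.e. $c\in C_2(p,d,j)$. Reading off the five lines of Table \ref{table:G_0}, the admissible slopes are: $\Fix(\langle\tau_p^\ell\rangle)\cap\{c:c^2+c+1=0\}$ (line 1); $C_1(p,d,i)\cap\Fix(\langle\tau_p^\ell\rangle)$ (line 2); the line-1 set intersected with $\{-2\}$, which forces $p=3$ and $c=1$ (line 3); $\{c:c^2+c+1=0\}\cap C_2(p,d,j)\cap\Fix(\langle\tau_p^\ell\rangle)$ (line 4); and $C_1(p,d,i)\cap C_2(p,d,j)\cap\Fix(\langle\tau_p^\ell\rangle)$ (line 5). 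Evaluating these via Lemma \ref{lemma:C1-C2} and Lemma \ref{lemma:C1-C2-obs} (taking $j=\ell$ in parts 1--2 of the latter, and recalling $m=\gcd(i,\ell)$) produces precisely the entries of Table \ref{table:V_c}.

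I expect the main obstacle to be this last, bookkeeping-heavy step: matching each of the five types against the case splits in Lemmas \ref{lemma:C1-C2} and \ref{lemma:C1-C2-obs}, which branch on the residues modulo $3$ of $p$ and of the orders $\left|\tau_p^i\right|$, $\left|\tau_p^j\right|$, and on various parities. A secondary point requiring care is the uniform validity of the identity $\langle(a,ca)^{K}\rangle=V_c$ for $c\in\Fix(\langle\tau_p^\ell\rangle)$ when $K$ is intransitive on $U^\#$ or is one of the sporadic groups of Table \ref{table:translin}; this is exactly where the \textsc{Magma}-assisted part of Lemma \ref{lemma:orbits} (2.) is needed. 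No single conceptual difficulty arises beyond assembling all the cases.
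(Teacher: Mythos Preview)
Your proposal is correct and follows essentially the same approach as the paper. The paper organizes the computation by choosing explicit coset representatives $R$ of $K$ in $G_\mathbf{0}$ and forming the set $\Omega=\{c_1^{-1}c_2\mid(c_1a,c_2a)\in(a,ca)^R\}$, while you phrase the same computation as the induced action of the generators on the ``slope'' $c$; both reduce to checking $c\in\Fix(\alpha(K))$ together with the fixed-point conditions that define $C_1(p,d,i)$ and $C_2(p,d,j)$, and then invoke Lemmas~\ref{lemma:C1-C2} and~\ref{lemma:C1-C2-obs} to read off Table~\ref{table:V_c}.
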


\begin{table}
\begin{tabular}{lll}
\hline
\small\textsf{r} & Conditions & $\big\langle (a,ca)^{G_\mathbf{0}} \big\rangle = V_c$ iff \\
\hline\hline
\small\textsf{1} & $p = 3$ & $c = 1$ \\
 & \begin{minipage}[t]{7.5cm}
	$p \equiv 1 \mymod{3}$, or $p \equiv 2 \mymod{3}$, $2 \mid d$, and $2 \mid \ell$ 
	\end{minipage} & $|c| = 3$ \\
\hline
\small\textsf{2} & $p = 3$, and either $3 \nmid |\tau^i_p|$ or $3 \nmid \ell/m$ & $c = 1$ \\
 & \begin{minipage}[t]{7.5cm}
	$p \equiv 1 \mymod{3}$, and either $3 \nmid |\tau^i_p|$ or $3 \nmid \ell/m$
	\end{minipage} & $|c| = 3$ \\
 & 
	\begin{minipage}[t]{7.5cm}
	$p \equiv 2 \mymod{3}$, $2 \mid d$, $2 \mid i$, $2 \mid \ell$, and either $3 \nmid |\tau^i_p|$ or $3 \nmid \ell/m$
	\end{minipage} & $|c| = 3$ \\
 & \begin{minipage}[t]{7.5cm}
	$3 \mid |\tau^i_p|$, $3 \mid \ell/m$, and $i/m \equiv 1\mymod{3}$
	\end{minipage} &
	\begin{minipage}[t]{5.5cm}
	$c = b^{p^m - 1}$, $b \in \F^\#_q$, $b + b^{p^m} + b^{p^{2m}} = 0$
	\end{minipage} \\
 & \begin{minipage}[t]{7.5cm}
	$3 \mid |\tau^i_p|$, $3 \mid \ell/m$, and $i/m \equiv 2\mymod{3}$
	\end{minipage} & 
	\begin{minipage}[t]{5.5cm}
	$c = b^{1 - p^m}$, $b \in \F^\#_q$, $b + b^{p^m} + b^{p^{2m}} = 0$
	\end{minipage} \\
\hline
\small\textsf{3} & $p = 3$ & $c = 1$ \\
\hline
\small\textsf{4} & $p = 3$ & $c = 1$ \\
 & \begin{minipage}[t]{7.5cm}
	$p \equiv 2\mymod{3}$, $2 \mid d$, $2 \nmid j$, and $2 \mid \ell$
	\end{minipage} & $|c| = 3$ \\
\hline
\small\textsf{5} & $p = 3$ & $c = 1$ \\
 & \begin{minipage}[t]{7.5cm}
	$p \equiv 2\mymod{3}$, $2 \mid |\tau^j_p|$, $2 \mid i$, $2 \nmid j$, and $2 \mid \ell$
	\end{minipage} & $|c| = 3$ \\
\hline
\end{tabular}
\caption{$\langle S \rangle$ for $G_\mathbf{0}$-orbits $S \subseteq V_{\text{$q$-dep}}$, with $G_\mathbf{0}$ as in Table \ref{table:G_0} line \small\textsf{r}} \label{table:V_c}
\end{table}

\begin{proof}
Assume throughout that $a \in U^\#$ and $c \in \F^\#_q \setminus \{-1\}$. For each $G_\mathbf{0}$ in Table \ref{table:G_0}, let $R$ be a complete set of coset representatives for $K$ in $G_\mathbf{0}$ and let $\Omega = \big\{ c_1^{-1}c_2 \ \big| \ (c_1a,c_2a) \in (a,ca)^R \big\}$. Hence in all cases
	\[ \big\langle (a,ca)^{G_\mathbf{0}} \big\rangle = \left\langle \bigcup_{c' \in \Delta} V^\#_{c'} \ \vline \ \Delta = \bigcup_{c'' \in \Omega} (c'')^{\alpha(K)} \right\rangle, \]
so it follows Lemma \ref{lemma:orbits} (2.) that $\big\langle (a,ca)^K \big\rangle = V_c$ if and only if $\Omega = \{c\}$ and $c \in \Fix(\alpha(K))$.

For $G_\mathbf{0}$ as in line 1, take $R = \big\{1, x, x^2\big\}$. Then $\Omega = \big\{ c, \, -(c+1)^{-1}, \, -c^{-1}(c+1) \big\} = \{c\}$ if and only if $c^2 + c + 1 = 0$. So $\big\langle (a,ca)^{G_\mathbf{0}} \big\rangle = V_c$ exactly when $c = 1$ and $p = 3$, or $|c| = 3$ and $p^{\gcd(d,\ell)} \equiv 1\mymod{3}$, which are precisely the conditions in row 1 of Table \ref{table:V_c}.

For $G_\mathbf{0}$ as in line 2, take $R = \big\{1, gx, g^2x^2 \big\}$. Then $\Omega = \big\{ c, \, (-(c+1)^{-1})^{\alpha(g)}, \, (-c^{-1}(c+1))^{\alpha(g^2)} \big\} = \{c\}$ if and only if $c \in C_1(p,d,i)$, where $C_1(p,d,i)$ is as in (\ref{equation:C1(p,d,i)}). So $\big\langle (a,ca)^{G_\mathbf{0}} \big\rangle = V_c$ exactly when $c \in C_1(p,d,i) \cap \Fix(\alpha(K))$. Applying Lemma \ref{lemma:C1-C2} (1.) and (2.) and Lemma \ref{lemma:C1-C2-obs} (1.) and (2.) yields the conditions in row 2 of Table \ref{table:V_c}.

For $G_\mathbf{0}$ as in line 3, take $R = \big\{1, x, x^2, y, xy, x^2y\big\}$. Then $\Omega = \big\{ c, \, -(c+1)^{-1}, \, -c^{-1}(c+1), \, -c(c+1)^{-1}, \, c^{-1}, \, -(c+1) \big\} = \{c\}$ if and only if $c^2 + c + 1 = 0$ and $c^2 = 1$. Equivalently, $c = 1$ and $p = 3$, which are the conditions in row 3 of Table \ref{table:V_c}.

For $G_\mathbf{0}$ as in line 4, take $R = \big\{1, x, x^2, hy, hxy, hx^2y\big\}$. Then $\Omega = \big\{ c, \, -(c+1)^{-1}, \, -c^{-1}(c+1), \, (-c(c+1)^{-1})^{\alpha(h)}, \, (c^{-1})^{\alpha(h)}, \, (-(c+1))^{\alpha(h)} \big\} = \{c\}$ if and only if $c^2 + c + 1 = 0$ and $c^{-p^j} = c$. Hence $\big\langle (a,ca)^{G_\mathbf{0}} \big\rangle = V_c$ exactly when either $c = 1$ and $p = 3$, or $|c| = 3$ and $p^{-j} \equiv p^{\gcd(d,\ell)} \equiv 1 \mymod{3}$. These are equivalent to the conditions in row 4 of Table \ref{table:V_c}.

For $G_\mathbf{0}$ as in line 5, take $R = \big\{1, gx, g^2x^2, hy, ghxy, g^2hx^2y\big\}$. Then $\Omega = \big\{ c, \, (-(c+1)^{-1})^{\alpha(g)},$ $(-c^{-1}(c+1))^{\alpha(g^2)}, \, (-c(c+1)^{-1})^{\alpha(h)}, \, (c^{-1})^{\alpha(gh)}, \, (-(c+1))^{\alpha(g^2h)} \big\} = \{c\}$ if and only if $c \in C_1(p,d,i) \cap C_2(p,d,j)$, where $C_1(p,d,i)$ and $C_2(p,d,j)$ are as in (\ref{equation:C1(p,d,i)}) and (\ref{equation:C2(p,d,i)}), respectively. Applying Lemma \ref{lemma:C1-C2-obs} (3.) yields the conditions in row 5 of Table \ref{table:V_c}.

That $\big\langle (a,ca)^{G_\mathbf{0}} \big\rangle = V$ for all cases not described in Table \ref{table:V_c} follows from \cite[Lemmas 2.6 and 2.7]{AGP-quotcomp}. This completes the proof.
\end{proof}

The main result in this section is Theorem \ref{theorem:maintheorem}.

\begin{theorem} \label{theorem:maintheorem}
Let $\Gamma = \lsg(H)$ and $G \leq \Aut(\Gamma)$, where $H$ is a finite group and $G$ is a subgroup of autoparatopisms of $\Gamma$. Then $\Gamma$ is $G$-arc-transitive if and only if $H$ is elementary abelian with additive identity $\mathbf{0}$, and the point stabilizer $G_\mathbf{0}$ is one of the groups in Table \ref{table:G_0}. Furthermore, with the notation of Hypothesis \ref{hypothesis} and Lemma \ref{lemma:subspaces-V_dep}, and with $T_W$ as in (\ref{equation:T_W}), one of the following holds:
	\begin{enumerate}[1.]
	\item The graph $\Gamma$ is $G$-quotient-complete with exactly one nontrivial complete $G$-normal quotient if and only if $p = 3$ and either
		\begin{enumerate}[a.]
		\item $G_\mathbf{0}$ is as in Table \ref{table:G_0} lines 1, 3, 4, or 5; or
		\item $G_\mathbf{0}$ is as in Table \ref{table:G_0} line 2, and either $3 \nmid |\tau^i_p|$ or $3 \nmid \ell/m$.
		\end{enumerate}
	The unique nontrivial complete normal quotient corresponds to the normal subgroup $T_{V_1}$.
	\item The graph $\Gamma$ is $G$-quotient-complete with exactly two nontrivial complete $G$-normal quotients if and only if one of the following holds:
		\begin{enumerate}[a.]
		\item $G_\mathbf{0}$ as in Table \ref{table:G_0} line 1, and either $p \equiv 1 \mymod{3}$, or $p \equiv 2 \mymod{3}$ and both $d$ and $\ell$ are even;
		\item $G_\mathbf{0}$ as in Table \ref{table:G_0} line 3; either $p \equiv 1 \mymod{3}$, or $p \equiv 2 \mymod{3}$ with $d$, $i$, and $\ell$ even; and either $3 \nmid |\tau^i_p|$ or $3 \nmid \ell/m$;
		\item $G_\mathbf{0}$ as in Table \ref{table:G_0} line 4, $p \equiv 2 \mymod{3}$, $j$ odd, and both $d$ and $\ell$ even;
		\item $G_\mathbf{0}$ as in Table \ref{table:G_0} line 5, $p \equiv 2\mymod{3}$, $|\alpha(h)|$ even, $i$ even, and $j$ odd.
		\end{enumerate}
	The two nontrivial complete normal quotients correspond to the normal subgroups $T_{V_c}$ and $T_{V_{c^{-1}}}$ where $|c| = 3$.
	\item The graph $\Gamma$ is $G$-arc-transitive and $G$-quotient-complete with exactly $p^{\gcd(d,m)} + 1$ nontrivial complete $G$-normal quotients if and only if $G_\mathbf{0}$ is as in Table \ref{table:G_0} line 2, $3 \mid |\alpha(g)|$, and $3 \mid \ell/m$. The nontrivial complete normal quotients correspond to the normal subgroups $T_{V_c}$ where
		\[ c = \left\{ \begin{aligned} &b^{p^m - 1} &&\text{if $i/m \equiv 1 \mymod{3}$} \\ &b^{1 - p^m} &&\text{if $i/m \equiv 2 \mymod{3}$} \end{aligned} \right. \]
for some $b \in \F^\#_q$ satisfying $b + b^{p^m} + b^{p^{2m}} = 0$.
	\item For all other cases, $\Gamma$ is $G$-vertex-quasiprimitive, i.e., all nontrivial normal subgroups of $G$ act transitively on the vertices of $\Gamma$.
	\end{enumerate}
\end{theorem}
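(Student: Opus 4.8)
The plan is to obtain the arc-transitivity characterization immediately from the earlier structure theorems and then to reduce the entire case analysis to the enumeration of one family of invariant subspaces. For the first assertion, combining Theorem \ref{theorem:arctrans-elemab} (which forces $H$ elementary abelian) with Theorem \ref{theorem:G_0} gives that $\Gamma$ is $G$-arc-transitive only if $G = T \rtimes G_\mathbf{0}$ with $G_\mathbf{0}$ one of the five types in Table \ref{table:G_0}; the converse is exactly the transitivity recorded in the last column of that table, once one notes that $\theta(G_\mathbf{0}) \supseteq \langle x \rangle$ is transitive on $\{1,2,3\}$ in every line. So the real work is the dichotomy between cases 1--3 (quotient-complete) and case 4 (quasiprimitive), which I would carry out by classifying the intransitive normal subgroups of $G$.

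First I would show that every nontrivial $N \unlhd G$ meets $T$ nontrivially. Since $T$ is abelian and regular on $V$, its centralizer in $\Sym(V)$, and hence $C_G(T)$, equals $T$; thus $N \cap T = 1$ would give $[N,T] \leq N \cap T = 1$, so $N \leq C_G(T) = T$, a contradiction. Consequently $N \cap T = T_W$ for a nonzero $G_\mathbf{0}$-invariant subspace $W$. By Lemma \ref{lemma:subspaces-V_ind} every orbit meeting $V_{\textnormal{$q$-ind}}$ generates all of $V$, while by Lemma \ref{lemma:subspaces-V_dep} an orbit in $V_{\textnormal{$q$-dep}}$ generates either $V$ or some $V_c$ with $c$ as in Table \ref{table:V_c}; and by Lemma \ref{lemma:orbits}(1) two orbits generating distinct $V_c$ already span $V$ (the $V_c$ being complementary $n$-subspaces). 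Hence the only proper nonzero invariant subspaces are the admissible $V_c$, and since $K$ is transitive, hence irreducible, on $U^\#$, each $V_c$ is an irreducible $G_\mathbf{0}$-module, so each $T_{V_c}$ is a minimal normal subgroup of $G$.

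Next I would establish completeness. Every admissible $c$ lies in $\F_q^\# \setminus \{0,-1\}$: the values $c=1$ with $p=3$, the values with $|c|=3$, and the values in $C_1(p,d,m)$ all avoid $0$ and $-1$. Hence under the isomorphism $V/V_c \cong U$, $(u,v) \mapsto v - cu$, each of $V_\infty^\#$, $V_0^\#$, $V_{-1}^\#$ maps onto $U^\# = (V/V_c)^\#$, so the connection set $S$ of $\Gamma \cong \Cay(V,S)$ surjects onto $(V/V_c)^\#$ and $\Gamma_{T_{V_c}} \cong K_{p^d}$. Because any intransitive $N$ has $N \cap T = T_{V_c}$ for some such $c$, the quotient $\Gamma_N$ is a normal quotient of the complete graph $\Gamma_{T_{V_c}}$ and is therefore complete or a single vertex; a short verification that $G_\mathbf{0}$ acts faithfully on each $V/V_c$ then shows $T_{V_c}$ is the unique normal subgroup realizing $\Gamma_{T_{V_c}}$, so every nontrivial complete normal quotient is one of the graphs $K_{p^d}$ (this is Theorem \ref{theorem:mainthm2}). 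It follows that $\Gamma$ is $G$-quotient-complete exactly when at least one admissible $V_c$ exists, is $G$-vertex-quasiprimitive (case 4) otherwise, and that $k$ equals the number of admissible slopes $c$.

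Finally I would read off $k$ line by line from Table \ref{table:V_c}, which is where the bookkeeping, and the main obstacle, lies. The slope $c=1$ in characteristic $3$ is the unique admissible value, giving $k=1$ (case 1); a solution with $|c|=3$ occurs in the conjugate pair $\{c,c^{-1}\}$ of primitive cube roots of unity, giving $k=2$ (case 2); and for line 2 with $3 \mid |\tau^i_p|$ and $3 \mid \ell/m$ the admissible slopes are parametrized by $C_1(p,d,m)$, so by Lemma \ref{lemma:C1-C2}(2) there are $k = |C_1(p,d,m)| = p^{\gcd(d,m)}+1 \geq 3$ of them (case 3). The remaining effort is to translate the field-theoretic descriptions of $C_1(p,d,i) \cap \Fix(\langle \tau^j_p\rangle)$ and $C_1(p,d,i) \cap C_2(p,d,j)$ from Lemmas \ref{lemma:C1-C2} and \ref{lemma:C1-C2-obs} into the explicit congruences on $p,d,i,j,\ell$ appearing in each line of Table \ref{table:V_c}, and then match these to the stated cases; this is routine but lengthy, the delicate points being to keep the parity and divisibility hypotheses consistent across lines 2--5 and to confirm that the three regimes $c=1$, $|c|=3$, and $c \in C_1(p,d,m)$ are mutually exclusive, which yields exactly the four alternatives of the theorem.
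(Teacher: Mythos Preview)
Your proposal is correct and follows essentially the same route as the paper: invoke Theorems \ref{theorem:arctrans-elemab} and \ref{theorem:G_0} for the arc-transitivity characterization, use Lemmas \ref{lemma:subspaces-V_ind} and \ref{lemma:subspaces-V_dep} to show the only proper $G_\mathbf{0}$-invariant subspaces are the admissible $V_c$, verify that each $\Gamma_{T_{V_c}}$ is complete, and then read off $k$ from Table \ref{table:V_c}. In fact your write-up is more careful than the paper's own proof, which simply asserts that the minimal normal subgroups are the $T_{V_c}$; your centralizer argument (using that $T$ is abelian regular, hence self-centralizing) explicitly justifies why every nontrivial normal subgroup meets $T$ nontrivially and why an intransitive one must equal some $T_{V_c}$.

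One small point to tighten: you invoke Lemma \ref{lemma:orbits}(1) and the transitivity of $K$ on $U^\#$ to conclude that two distinct $V_c$'s span $V$ and that each $V_c$ is irreducible. But in lines 4 and 5 of Table \ref{table:G_0} the group $K$ itself need not be transitive (only $\langle K,h\rangle$ is). This is harmless, since the conclusion you need follows more simply from dimensions: each $V_c$ has $\F_p$-dimension $d$, and $V_c \cap V_{c'} = 0$ for $c \neq c'$, so any $G_\mathbf{0}$-invariant subspace strictly containing some $V_c$ is all of $V$. This simultaneously gives irreducibility of each admissible $V_c$ and the fact that a proper invariant subspace can contain only one of them, without appealing to transitivity of $K$.
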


\begin{proof}
That $G_\mathbf{0}$ is as in Table \ref{table:G_0} follows from Theorem \ref{theorem:G_0}. Suppose that the conditions in the second column of Table \ref{table:V_c} are satisfied for each respective $G_\mathbf{0}$. It follows immediately from Lemmas \ref{lemma:subspaces-V_ind} and \ref{lemma:subspaces-V_dep} that the minimal normal subgroups of $G$ are $T_{V_c}$ with $c$ as given in Table \ref{table:V_c}. Since $c \neq 0$, the elements in $T_{V_0}$ form a complete set of coset representatives for $T_{V_c}$ in $G$. Hence $\Gamma \cong K_q$, and statements 1--3 follow. Otherwise the minimal normal subgroup of $G$ is $T$, so that $\Gamma$ is $G$-vertex-quasiprimitive. This completes the proof.
\end{proof}

Observe that if $\Gamma$ has $k \geq 3$ nontrivial complete $G$-normal quotients then $\Gamma$ is as in Theorem \ref{theorem:maintheorem}, and in particular $\Gamma$ has order $p^{2d}$, $k = p^r + 1 \leq p^d + 1$ for some divisor $r$ of $d$, and each nontrivial complete quotient graph has order $p^d$. That is, $\Gamma$ satisfies \cite[Theorems 1.2 and 1.3]{AGP-quotcomp} and thus is one of the graphs in \cite[Tables 1 and 2]{AGP-quotcomp}.

We now prove Theorems \ref{theorem:mainthm1} and \ref{theorem:mainthm2}.

\begin{proof}[Proof of Theorem \ref{theorem:mainthm1}]
Suppose that $p \equiv 2 \mymod{3}$, $d$ is odd, and $d \not\equiv 0 \mymod{3}$. It follows from Theorem \ref{theorem:maintheorem} that $\Gamma$ is $G$-vertex-quasiprimitive. Conversely, if $p = 3$ or $p \equiv 1 \mymod{3}$ then $\Gamma$ is $G$-quotient-complete for $G_\mathbf{0}$ as in line 1 of Table \ref{table:G_0}. Suppose that $p \equiv 2 \mymod{3}$. If $d$ is even take $G_\mathbf{0}$ to be as in line 1 of Table \ref{table:G_0} with $\ell = d$ (so that $K \leq \GL(U)$), and if $d \equiv 0 \mymod{3}$ take $G_\mathbf{0}$ to be as in line 2 with $i = 1$ and $\ell = d$. In both cases $\Gamma$ is $G$-quotient-complete. Hence Theorem \ref{theorem:mainthm1} holds.
\end{proof}

\begin{proof}[Proof of Theorem \ref{theorem:mainthm2}]
This follows immediately from Theorem \ref{theorem:maintheorem}.
\end{proof}

\begin{example} \label{example:n=1}
Assume Hypothesis \ref{hypothesis} with $n = 1$ and $\pi(G_\mathbf{0}) = \GL_1(q)$, and let $\Gamma = \lsg(H)$. Identify $\GL_1(q)$ with $\F^\#_q$, and let $\F^\square_q = \big\{ c^2 \ \big| \ c \in \F^\#_q \big\}$. Then $\Gamma$ is $G$-arc-transitive only if $G_\mathbf{0}$ is one of $\big\langle \F^\#_q, x \big\rangle$, $\big\langle \F^\#_q, x, y \big\rangle$, and $\big\langle \F^\square_q, x, hy \big\rangle$ for some $h \in \F^\#_q \setminus \F^\square_q$. (Indeed, if $G_\mathbf{0}$ is as in line 2 or 5 of Table \ref{table:G_0} then $|\pi(G_\mathbf{0})| = (q-1)/3$, so $\pi(G_\mathbf{0})$ is intransitive on $H^\#$.) Applying Theorem \ref{theorem:maintheorem}, and noting that $d = \ell$ for all possible $G_\mathbf{0}$, we get the following:
	\begin{enumerate}[1.]
	\item The graph $\Gamma$ is $G$-arc-transitive and $G$-quotient-complete with exactly one nontrivial complete $G$-normal quotient, corresponding to the normal subgroup $T_{V_1}$, if and only if $p = 3$ and $G_\mathbf{0}$ is one of the groups $\big\langle \F^\#_q, x \big\rangle$, $\big\langle \F^\#_q, x, y \big\rangle$, and $\big\langle \F^\square_q, x, hy \big\rangle$ with $h \in \F^\#_q \setminus \F^\square_q$.
	\item The graph $\Gamma$ is $G$-arc-transitive and $G$-quotient-complete with exactly two nontrivial complete $G$-normal quotients, corresponding to the minimal normal subgroups $T_{V_c}$ and $T_{V_{c^{-1}}}$ with $|c| = 3$, if and only if one of the following holds:
		\begin{enumerate}[a.]
		\item $G_\mathbf{0} = \big\langle \F^\#_q, x \big\rangle$ and either $p \equiv 1 \mymod{3}$, or $p \equiv 2 \mymod{3}$ with $d$ even; or
		\item $G_\mathbf{0} = \big\langle \F^\square_q, x, hy \big\rangle$, $h \in \F^\#_q \setminus \F^\square_q$, $p \equiv 2 \mymod{3}$, $j$ odd, and $d$ even.
		\end{enumerate}
	\item For all other cases, $\Gamma$ is $G$-vertex-quasiprimitive.
	\end{enumerate}
The graphs and groups $G_\mathbf{0}$ in item 2.a. are precisely those in Example \ref{example:lsg}.
\end{example}


\section*{Acknowledgements}

The author thanks Prof. Pablo Spiga for introducing the graphs and corresponding automorphism groups in Example \ref{example:Cay} and Prof. John Bamberg for pointing out the connection to finite nets.

\end{document}